\newcommand{\EX}{\ensuremath{\mathcal{E}(X)}}
\newcommand{\EH}{\ensuremath{\mathcal{E}_H(X)}}
\newcommand{\EG}{\ensuremath{\mathcal{E}(G)}}
\newcommand{\EP}{\ensuremath{\mathcal{E}_P(G)}}
\newcommand{\EL}{\ensuremath{\mathcal{E}_L(G/N)}}
\newcommand{\E}{\ensuremath{\mathcal{E}(G/N)}}
\newcommand{\Eio}{\ensuremath{\mathcal{E}_i^o}}
\newcommand{\Ei}{\ensuremath{\mathcal{E}_i}}
\def\g{\mathfrak{g}}
\def\k{\mathfrak{k}}
\def\p{\mathfrak{p}}
\def\a{\mathfrak{a}}
\def\ap{\mathfrak{a}_\p}
\def\np{\mathfrak{n}_\p}
\def\nk{\mathfrak{n}}
\def\nb{\bar{\mathfrak{n}}}
\def\Ap{A_\p}
\def\Mp{M_\p}
\def\M'p{M'_\p}
\def\Np{N_\p}
\def\Md{\widehat{M}_d}
\def\bn{\bar{n}}
\def\bnu{\bar{\nu}}
\newcommand{\n}{\boldsymbol{\bar{n}}}
\newcommand{\m}{\boldsymbol{m}}
\newcommand{\ab}{\boldsymbol{a}}
\newcommand{\lm}{\boldsymbol{l}}
\def\Iw{\mathcal{I}_w}
\newcommand{\test}{C_c^\infty}
\DeclareMathOperator{\Ad}{Ad}
\DeclareMathOperator{\Ker}{Ker}
\newcommand{\N}{\ensuremath{\bar{N}}}
\newcommand{\R}{\mathbb{R}}
\newcommand{\C}{\mathbb{C}}
\newcommand{\GR}{\ensuremath{\mathrm{SL}_2(\R)}}
\newcommand{\GC}{\ensuremath{\mathrm{SL}_2(\C)}}
\newcommand{\scal}[2]{\langle #1,#2\rangle}
\newcommand{\ind}[3]{\ensuremath{\mathop{\rm{Ind}}\nolimits_{#1}^{#2}{#3}}}
\newcommand{\supp}[1]{\ensuremath{\mathop{\rm{Supp}}#1}}
\theoremstyle{plain}
\newtheorem{theorem}{Theorem}
\newtheorem{lemma}{Lemma}
\newtheorem{proposition}{Proposition}
\newtheorem{corollary}{Corollary}
\theoremstyle{definition}
\newtheorem{definition}{Definition}
\newtheorem{example}{Example}
\newtheorem{notation}{Notation}
\newtheorem{remark}{Remark}
\begin{document}

\title[Hilbert modules for parabolically induced representations]{Hilbert modules associated to parabolically induced representations}
\author{Pierre Clare}
\address{Pierre Clare\\ The Pennsylvania State University\\ Department of Mathematics\\ McAllister Building\\ University Park, PA - 16802}
\email{clare@math.psu.edu}\subjclass[2000]{46L08, 22D25, 22E46}
\keywords{Hilbert modules, group $C^*$-algebras, induced representations, semisimple Lie groups, parabolic induction, principal series representations.}
\date{}

\setcounter{tocdepth}{1}

\begin{abstract}
To a measured space carrying two group actions, we associate a Hilbert $C^*$-module in a way that generalises Rieffel's construction of induction modules. This construction is then applied to describe the $P$-series of a semisimple Lie group. We provide several realisations of this module, corresponding to the classical pictures for the $P$-series. We also characterise a class of bounded operators on the module which satisfy some commutation relation, and interpret the result as a generic irreducibility theorem. Finally, we establish the convergence of standard intertwining integrals on a dense subset of this module.
\end{abstract}

\maketitle

\section{Introduction}\label{intro}

\subsection{Motivation and outline} In \cite{RieffelIRC*A}, M. A. Rieffel developed a general theory of induced representations for $C^*$-algebras by means of Hilbert modules. Namely, in the special case of group $C^*$-algebras, given a closed subgroup $H$ of a locally compact group $G$, his construction yields a $C^*(H)$-Hilbert module $E_H^G$ equipped with an action of $C^*(G)$ by bounded operators. This module \textit{contains} all representations induced from $H$ to $G$, in the sense that, for every representation $(\rho,\mathcal{H}_\rho)$ of $H$, there exists a map between $E_H^G\otimes_{C^*(H)}\mathcal{H}_\rho$ and the space of $\ind{H}{G}{\rho}$, that preserves the scalar products on these Hilbert spaces, and intertwines the actions of $C^*(G)$. Among the advantages of this point of view is the possibility to express Mackey's Imprimitivity Theorem under the neat form of the Morita equivalence between $C^*(H)$ and $C_0(G/H)\rtimes G$.

Understanding the reduced dual $\hat{G}_r$ of a group $G$ as a measured space amounts to explicitely writing down its Plancherel measure. This task was achieved in the case of semisimple Lie groups by Harish-Chandra in \cite{HC70}. In particular, it is shown that besides the atomic part, that is the discrete series, the support of the Plancherel measure consists in a special type of induced representations, namely the $P$-series. Let us fix notations: $G$ is a linear connected semisimple Lie group with finite center, and $P$ a cuspidal parabolic subgroup with Langlands decomposition $P=MAN$. We denote by $\Md$ the discrete series of $M$ and $\widehat{A}$ the unitary dual of $A$.

\begin{definition}\label{Pseries}
The \textit{$P$-series} of $G$ is the family of representations of the form \[\ind{P}{G}{\sigma\otimes\chi\otimes\rm{1}},\] where $\sigma\in\Md$ and $\chi\in\widehat{A}$.
\end{definition}

Among the facts involved in the description of $\hat{G}_r$, which will be explained in greater detail below, it is shown that these representations are generically irreducible \cite{HC71}, and that the families $\widehat{G}_P$ of classes of irreducible components of the $P$-series representations are in fact parametrised by the conjugacy classes of Levi components $L=MA$ of the cuspidal parabolic subgroups $P$. In view of these facts, it appears that properly describing $P$-series representations within the Hilbert modules setting should involve modules over $C^*(L)$ rather than over $C^*(P)$, as would a direct application of Rieffel's theory. 

Following this remark, we were led to generalise the construction of \cite{RieffelIRC*A} in order to obtain Hilbert modules suited to the description of $P$-series. Section \ref{construction} presents our construction, in which the data consist in two groups $G$ and $H$ acting on a space $X$ equipped with a measure satisfying certain equivariance hypotheses. The outcome is a $C^*(H)$-Hilbert module $\EX$ carrying an action of $C^*(G)$ by bounded operators. We apply the procedure to the special case of $P$-series in Section \ref{E(G/N)}: it yields Hilbert modules over the $C^*$-algebra of the Levi component, which nevertheless still induce the $P$-series. Section \ref{pictures} is devoted to providing different pictures of these modules, corresponding to classical properties of $P$-series representations. In Section \ref{irred}, we characterise bounded invariant operators commuting to the action of $C^*(G)$ and interpret the result as a generic irreducibility result, reflecting theorems of Bruhat and Harish-Chandra in the $C^*$-algebraic framework. Another application of this point of view is given in Section \ref{intertwine}, where we establish convergence of intertwining integrals.

As explained at the end of the paper, the modules discussed here also constitute the framework for a global theory of Knapp-Stein intertwining operators. In the classical representation theory, these operators allow to account for subtle reducibility phenomena, that is fine topological properties of the reduced dual, which do not manifest in the study of the Plancherel measure. Following the general principle of Noncommutative Geometry, the reduced dual of a group should be studied \textit{via} its $C^*$-algebra. Subsequentely, developing a theory of intertwiners at the Hilbert module level is likely to provide a useful tool in the analysis of reduced $C^*$-algebras associated to Lie groups, as advocated in Section \ref{images}.

\subsection{General notations and preliminaries}\label{generalnotations}
The reader is referred to \cite{HelgasonDS,HelgasonGASS}, \cite{Knapp1,KnappBeyond}, and \cite{Lipsman} for general facts about structure and representation theory of semisimple Lie groups, and to \cite{Lance} for the theory of Hilbert modules.

The space of compactly supported functions on a topological space $X$ with complex values is denoted by $C_c(X)$. If $G$ is a locally compact group, we write $dg$ for a left Haar measure on $G$, and $\Delta_G$ for the corresponding modular function. The maximal and reduced $C^*$-algebras of the group are respectively denoted by $C^*(G)$ and $C^*_r(G)$. If $A$ is a $C^*$-algebra and $E,F$ are $A$-Hilbert modules, the set of bounded (that is adjointable) operators between $E$ and $F$ is denoted by $\mathcal{L}_A(E,F)$ or simply $\mathcal{L}(E,F)$, while the compact operators are denoted by $\mathcal{K}_A(E,F)$ or $\mathcal{K}(E,F)$. In the case $E=F$, the set $\mathcal{L}(E)$ is a $C^*$-algebra containing $\mathcal{K}(E)$ as a two-sided ideal.

The \textit{multipliers of $E$} are the elements of $\mathcal{L}(A,E)$, also denoted $\mathcal{M}(E)$, where $A$ is viewed as a Hilbert module over itself. The module $E$ may be embedded in $\mathcal{M}(E)$ by associating to $\xi\in E$ the map $m_\xi:a\longmapsto\xi.a$ on $A$, with adjoint map $\scal{\xi}{\cdot}$. If $E=A$, we recover one of the classical equivalent definitions of the \textit{multiplier algebra} of $A$, and $\mathcal{M}(E)$ is a Hilbert module over $\mathcal{M}(A)$. The construction giving $\mathcal{M}(E)$ is functorial and for $T\in\mathcal{L}(E,F)$, we write $M(T):\mathcal{M}(E)\rightarrow\mathcal{M}(F)$ the operator of left composition by $T$, with adjoint $M(T^*)$. For $\xi\in E$ and $\eta\in F$, \[m_{\scal{T.\xi}{\eta}} = \scal{M(T).m_\xi}{m_\eta}\] holds with inner products respectively taking values in $F$ and $\mathcal{M}(F)$.

An example of elements in $\mathcal{M}(C^*(G))$ is given by extending left translations of compactly supported functions on $G$: for $g\in G$, we denote by $U_g$ the multiplier of $C^*(G)$ defined by $U_g.f=f(g^{-1}\cdot)$ for any $f\in C_c(G)$.

\section{General construction}\label{construction}

Let $X$ be a locally compact topological space, with commuting actions of two locally compact groups $G$ and $H$, from the left and the right respectively. The $H$-action is assumed to be proper, so that $X/H$ is locally compact. In what follows, we will also need a paracompactness assumption on $X/H$, which will always be satisfied in the further examples, where $X/H$ will turn out to be a compact manifold. Let us finally assume that $X$ carries a $G$-invariant Borel measure $\mu$, which is $H$-relatively invariant with character $\delta_X$. It means that the relation $d\mu(g.x.h) = \delta_X(h)d\mu(x)$ is satisfied for any $g\in G$, $h\in H$.
From the above data \[G\curvearrowright (X,\mu)\curvearrowleft H,\] we build a right $C^*(H)$-Hilbert module together with a left action of $C^*(G)$ by bounded operators. This module will be obtained as the completion of $C_c(X)$ with respect to an appropriate norm. Let us first describe the pre-Hilbert module structure on $C_c(X)$ over the dense  involutive subalgebra $C_c(H)$ of $C^*(H)$. For any $f\in C_c(X)$ and $\varphi\in C_c(H)$, define \[(f.\varphi)(x)=\int_H\frac{1}{\Delta_H(h)^{\frac{1}{2}}\delta_X(h)^{\frac{1}{2}}}f(x.h^{-1})\varphi(h)\,dh\] for all $x\in X$.

\begin{notation}
The map $H\rightarrow\R_+^*$ defined by $\delta_X^{\frac{1}{2}}\Delta_H^{-\frac{1}{2}}$ will be denoted $\gamma_X$.
\end{notation}

\begin{proposition}\label{scal}
For $f,g\in C_c(X)$ and $h\in H$, let \[\scal{f}{g}(h)=\gamma_{X}(h)\int_X\overline{f(x)}g(x.h)\,d\mu(x).\] The map thus defined on $C_c(X)\times C_c(X)$ is a $C_c(H)$-valued inner product on $C_c(X)$.
\end{proposition}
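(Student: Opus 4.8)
The plan is to establish the four defining properties of a $C_c(H)$-valued inner product: (i) that $\langle f,g\rangle$ lies in $C_c(H)$; (ii) that the pairing is sesquilinear and Hermitian, $\langle f,g\rangle^{*}=\langle g,f\rangle$ (for the involution $\varphi^{*}(h)=\Delta_H(h)^{-1}\overline{\varphi(h^{-1})}$ of $C_c(H)$); (iii) that it is $C_c(H)$-linear in the second variable, $\langle f,g.\varphi\rangle=\langle f,g\rangle*\varphi$; and (iv) that $\langle f,f\rangle$ is a positive element of $C^{*}(H)$, vanishing only for $f=0$. Properties (i)--(iii) are direct computations; (iv) is the substantial point.

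For (i), properness of the $H$-action makes the set $\{h\in H:(\operatorname{Supp}f).h\cap\operatorname{Supp}g\neq\varnothing\}$ compact, and off it the integrand defining $\langle f,g\rangle(h)$ vanishes, so $\langle f,g\rangle$ is compactly supported; continuity follows from dominated convergence (for $h$ in a fixed compact set the integration is over the fixed compact set $\operatorname{Supp}f$, with jointly continuous bounded integrand) together with continuity of the character $\gamma_{X,H}$. For the Hermitian property I would start from $\langle g,f\rangle(h)$, change variables $x\mapsto x.h^{-1}$, and use the relative invariance $d\mu(x.h)=\delta_X(h)\,d\mu(x)$: the modular factors this produces, combined with $\gamma_{X,H}=\delta_X^{1/2}\Delta_H^{-1/2}$, reorganise into exactly the factor $\Delta_H(h)^{-1}\gamma_{X,H}(h^{-1})$ sitting in front of $\overline{\langle f,g\rangle(h^{-1})}$. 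For (iii) I would insert the definition of $g.\varphi$, apply Fubini (all supports are compact), use that $\gamma_{X,H}$ is a character to factor $\gamma_{X,H}(h)=\gamma_{X,H}(hk^{-1})\gamma_{X,H}(k)$ and pull out the $h$-dependence, and then change variables in $H$; what is left is the numerical identity $(\Delta_H^{-1/2}\delta_X^{-1/2})\,\gamma_{X,H}\,\Delta_H\equiv 1$, which holds by definition of $\gamma_{X,H}$. The common theme is that the weights in the construction are calibrated precisely so that the modular functions $\Delta_H$ and $\delta_X$ cancel.

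For the positivity in (iv) the cleanest route is through a genuine Hilbert space. Following Rieffel, $\langle\cdot,\cdot\rangle$ being positive is equivalent to: for every representation $(\rho,\mathcal H_\rho)$ of $C^{*}(H)$ the sesquilinear form $(f\otimes\xi,g\otimes\eta)\mapsto\langle\rho(\langle f,g\rangle)\xi,\eta\rangle$ on $C_c(X)\otimes\mathcal H_\rho$ is positive semidefinite, i.e. $\langle\rho(\langle f,f\rangle)\xi,\xi\rangle\geq 0$. This holds because that form coincides with the scalar product of the induced Hilbert space attached to $\rho$ --- an honest $L^{2}$-space of sections of the associated bundle $X\times_{H}\mathcal H_\rho$ over $X/H$, whose inner product is positive by construction (using properness and paracompactness of $X/H$); the role of the weight $\gamma_{X,H}$ is precisely to make this coincidence hold. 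Concretely, the computational backbone is the identity $\langle f,g\rangle(h)=\Delta_H(h)^{-1/2}\langle f,R(h)g\rangle_{L^{2}(X,\mu)}$, where $R$ is the unitary representation $(R(h)\xi)(x)=\delta_X(h)^{1/2}\xi(x.h)$ of $H$ on $L^{2}(X,\mu)$ --- isometric because $\mu$ is $H$-relatively invariant with character $\delta_X$, multiplicative because $\delta_X$ is a character, and reducing, for $X=H$ with Haar measure, to $f^{*}*f(h)=\Delta_H(h)^{-1/2}\langle f,r(h)f\rangle_{L^{2}(H)}$ for the right regular representation. Through this identity positivity reduces to the standard fact that $h\mapsto\Delta_H(h)^{-1/2}\langle v,\pi(h)v\rangle$ --- a matrix coefficient of a unitary representation, twisted by $\Delta_H^{-1/2}$, here with compactly supported restriction to $H$ by properness --- is a positive element of $C^{*}(H)$: for unimodular $H$ this is just "positive-definite functions are positive in $C^{*}(H)$", and the twist $\Delta_H^{-1/2}$ (that is, the factor $\gamma_{X,H}$) absorbs non-unimodularity. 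Definiteness is then immediate: $\langle f,f\rangle(e)=\|f\|_{L^{2}(X,\mu)}^{2}$, which is zero only for $f=0$, since $C_c(H)$ embeds in $C^{*}(H)$ and $\mu$ has full support (otherwise one restricts to the $G$- and $H$-invariant closed set $\operatorname{Supp}\mu$).

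The single genuine obstacle is the positivity in (iv); everything else is bookkeeping with the two modular functions $\Delta_H$ and $\delta_X$. I would carry (iv) out as above --- realising $\langle f,f\rangle$ via the induced Hilbert space, with the $L^{2}(X,\mu)$-identity as the computational core --- rather than attempting a bare-hands proof, precisely because the weights $\gamma_{X,H}$ and the one appearing in $f.\varphi$ have been engineered to make that realisation exact.
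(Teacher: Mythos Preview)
Your treatment of (i)--(iii) is fine and matches the paper's one-line dismissal of these as straightforward. The substance is indeed (iv), and here your two routes diverge in quality.

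Your first route --- identifying $\langle\rho(\langle f,f\rangle)\xi,\xi\rangle$ with the inner product on an induced Hilbert space --- is correct in spirit and is precisely what the paper does. But to make the identification concrete you need to actually compute, and the tool for that is a \emph{Bruhat section}: a nonnegative continuous $\psi$ on $X$ with $\int_H\psi(xh)\,dh=1$ and the usual support condition, whose existence uses paracompactness of $X/H$. The paper inserts $1=\int_H\psi(xh')\,dh'$ into $(\rho(\langle f,g\rangle)\xi,\eta)$, changes variables, and arrives at
\[
(\rho(\langle f,g\rangle)\xi,\eta)=\int_X\psi(x)\,(\rho(\tilde f_x^{\,*}*\tilde g_x)\xi,\eta)\,d\mu(x),\qquad \tilde u_x(h)=\gamma_X(h)\,u(xh),
\]
so that $\langle f,f\rangle$ is an integral, against the positive measure $\psi\,d\mu$, of the manifestly positive elements $\tilde f_x^{\,*}*\tilde f_x\in C^*(H)$. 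You gesture at ``properness and paracompactness of $X/H$'' but never invoke the section; without it your identification with the induced inner product remains an assertion rather than a proof.

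Your second route --- the ``computational backbone'' --- has a genuine gap. The identity $\langle f,g\rangle(h)=\Delta_H(h)^{-1/2}\langle f,R(h)g\rangle_{L^2(X,\mu)}$ is correct, but the ``standard fact'' you then invoke --- that a compactly supported function of the form $h\mapsto\Delta_H(h)^{-1/2}\langle v,\pi(h)v\rangle$ is a positive \emph{element} of $C^*(H)$ --- is not standard, and your one-line justification conflates two different notions. A positive-definite function on $H$ yields a positive linear \emph{functional} on $C^*(H)$ (namely $f\mapsto\int f\varphi$); that is not the same as being a positive \emph{element}, and ``positive-definite functions are positive in $C^*(H)$'' is not a textbook theorem one can simply cite. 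For the specific representation $R$ on $L^2(X,\mu)$ and the specific vectors $f\in C_c(X)$ the conclusion \emph{is} true, but proving it is exactly the content of the proposition --- and the clean way to do so is the Bruhat-section computation above, which exhibits the matrix coefficient as an integral of elements of the form $\psi^**\psi$ rather than appealing to an abstract principle about arbitrary unitary representations. In short: keep Route~A, drop Route~B, and actually carry out the section argument.
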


\begin{proof}
Let $f,g\in C_c(X)$ and $\varphi\in C_c(H)$. The relations $\scal{f}{g}^*=\scal{g}{f}$ and $\scal{f}{g.\varphi}=\scal{f}{g}\varphi$ in $C^*(H)$ follow from straightforward calculations and the definition of $\gamma_{X}$. The positivity of $\scal{f}{f}$ in $C^*(H)$ relies on the use of a \textit{Bruhat section} for $X$ over $X/H$: it is proved in \cite{BourbakiInt} that the paracompactness of $X/H$ guarantees the existence of a nonnegative bounded continuous function $\psi$ on $X$ such that $\supp{\psi}\cap K.H$ is compact whenever $K$ is a compact subset of $X$, and $\int_H\psi(xh)\,dh=1$ holds for all $x\in X$. Let $(\rho,V)$ be a unitary representation of $H$, the scalar product on $V$ being denoted by $\left(\cdot,\cdot\right)$. The corresponding representation of $C^*(H)$ will still be denoted $\rho$. For $\xi,\eta\in V$,

\begin{align*}
\left(\rho(\scal{f}{g})\xi,\eta\right) & = \int_H\gamma_X(h)\int_X\overline{f(x)}g(xh)\left(\rho(h)\xi,\eta\right)\,d\mu(x)\,dh\\
 & = \int_H\gamma_X(h)\int_X\overline{f(x)}g(xh)\left(\rho(h)\xi,\eta\right)\int_H\psi(xh')\,dh'\,d\mu(x)\,dh\\
 & = \int_X\psi(x)\int_{H\times H}\overline{f(xh'^{-1})}g(xh'^{-1}h)\left(\rho(h)\xi,\eta\right)\frac{\gamma_X(h)}{\delta_X(h')}\,dh'\,dh\,d\mu(x).
\end{align*}

For $u\in C_c(H)$ and $x\in X$, let $\tilde{u}_x(h)=\gamma_X(h)u(x.h)$ for all $h\in H$. Then,

\begin{align*}
\left(\rho(\scal{f}{g})\xi,\eta\right) & = \int_X\psi(x)\int_H\left(\tilde{f}_x^**\tilde{g}_x\right)(h)\left(\rho(h)\xi,\eta\right)\,dh\,d\mu(x)\\
& = \int_X\psi(x)\left(\rho(\tilde{f}_x^**\tilde{g}_x)\xi,\eta\right)\,d\mu(x)\label{eqpos}\tag{\dag}.
\end{align*}

The above computations are justified by the fact that the support of \[x\mapsto\int_H\left(\tilde{f}_x^**\tilde{g}_x\right)(h)\left(\rho(h)\xi,\eta\right)\,dh\] is contained in $\supp{f}.H\cap\supp{g}.H$ which has compact intersection with $\supp{\psi}$. Setting $f=g$ in (\ref{eqpos}), it is clear that $\scal{f}{f}$ is a positive element of $C_c(H)\subset C^*(H)$ and, taking $\rho$ to be faithfull, that $\scal{f}{f}=0$ implies $f=0$.
\end{proof}

\begin{definition}
Let $X$, $\mu$ and $H$ be as above. We denote by $\mathcal{E}_H(X,\mu)$ the Hilbert module over $C^*(H)$ obtained by completing $C_c(X)$ with respect to the norm $\|f\|=\left|\scal{f}{f}\right|_{C^*(H) }^{\frac{1}{2}}$ and extending the action of $C_c(H)$ to $C^*(H)$.
\end{definition}

\begin{notation}
When no confusion is likely to arise, this module will simply be denoted by $\EH$ or $\EX$.
\end{notation}

Let us now describe the left action of $C^*(G)$.

\begin{proposition}\label{Prop_leftaction}
Let $X$, $\mu$, $H$ and $G$ be as above. The action of $G$ on $X$ induces a $*$-morphism \[C^*(G)\longrightarrow\mathcal{L}_{C^*(H)}\left(\EX\right)\]
\end{proposition}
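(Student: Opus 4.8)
The plan is to construct the $*$-morphism on the dense subalgebra $C_c(G)\subset C^*(G)$ by the natural convolution-type formula and then check that it extends. For $F\in C_c(G)$ and $f\in C_c(X)$, I would define
\[(F.f)(x)=\int_G F(g)\,f(g^{-1}.x)\,dg\]
for all $x\in X$; since the $G$-action is continuous and $\mu$ is $G$-invariant, this produces an element of $C_c(X)$ (the support of $F.f$ is contained in $\supp F\cdot\supp f$). First I would verify that $F\mapsto(f\mapsto F.f)$ is an algebra homomorphism from $C_c(G)$ to linear endomorphisms of $C_c(X)$, which is a direct change of variables using left-invariance of $dg$, and that it is $*$-preserving for the $C_c(H)$-valued inner product: the relation $\scal{F^*.f}{g}=\scal{f}{F.g}$ reduces, after interchanging the $G$- and $X$-integrals and using $G$-invariance of $\mu$, to the definition $F^*(g)=\Delta_G(g)^{-1}\overline{F(g^{-1})}$; here the commutation of the two actions is what guarantees that the $H$-translation $x\mapsto x.h$ passes through the $G$-integral cleanly, so the $\gamma_{X,H}$ factor in the inner product is untouched. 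I would also check $C_c(H)$-linearity on the right, $F.(f.\varphi)=(F.f).\varphi$, which again follows from commutativity of the actions.

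The substantive point is boundedness: I must show that for each $F\in C_c(G)$ the operator $f\mapsto F.f$ is bounded for the Hilbert-module norm, with $\|F.f\|\le C_F\|f\|$, so that it extends to an adjointable operator on $\EX$ and the assignment extends to a $*$-morphism on all of $C^*(G)$. The standard device is to dominate $\scal{F.f}{F.f}$ by a multiple of $\scal{f}{f}$ in $C^*(H)$. Writing everything out and using the Bruhat section $\psi$ exactly as in the proof of Proposition~\ref{scal}, one evaluates $\rho(\scal{F.f}{F.f})$ in an arbitrary unitary representation $(\rho,V)$ of $H$ and reorganises the $G$-integrals: the inner $X$-integral against $\psi$ turns the expression into an integral over $X/H$ of local convolution squares, and a Cauchy--Schwarz estimate in the $G$-variable (treating $F$ as an $L^1$-function and using $\|F\|_{L^1(G)}$ as the constant) bounds the integrand pointwise in the operator order by the corresponding expression for $f$ alone. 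Concretely I expect an inequality of the shape $\scal{F.f}{F.f}\le \|F\|_{C^*(G)}^2\,\scal{f}{f}$ in $C^*(H)$, or at least with $\|F\|_{L^1(G)}^2$ in place of the $C^*$-norm, which is all that is needed to pass to the completion; adjointability is then automatic since the adjoint of the extension of $F.\,$ is the extension of $F^*.\,$, already identified on $C_c(X)$.

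The main obstacle is precisely this boundedness estimate: unlike the algebraic identities, it is not a formal manipulation, and one has to be careful that the $H$-equivariance interacts correctly with the Bruhat section, since $\psi$ is not $H$-invariant but only satisfies $\int_H\psi(xh)\,dh=1$. The cleanest route is probably to first establish the domination $\scal{F.f}{F.f}\le\|F\|^2_{L^1(G)}\scal{f}{f}$ for $F\ge 0$ supported near the identity and then bootstrap, or alternatively to realise $F.\,$ as an integral $\int_G F(g)\,L_g\,dg$ where $L_g$ is the isometry of $\EX$ induced by the measure-preserving map $x\mapsto g.x$ (here $G$-invariance of $\mu$ gives $\scal{L_g f}{L_g g'}=\scal{f}{g'}$ on the nose, because the $\gamma_{X,H}$ factor and the $H$-translation both commute with the $G$-action), so that $g\mapsto L_g$ is a strictly continuous unitary representation of $G$ in $\mathcal{L}(\EX)$ and $F\mapsto\int_G F(g)L_g\,dg$ is its integrated form, manifestly a $*$-morphism $C^*(G)\to\mathcal{L}_{C^*(H)}(\EX)$ of norm $\le 1$. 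I would present the argument in this second form, as it isolates the only genuinely new verification — that each $L_g$ is a well-defined adjointable isometry — and makes the passage to $C^*(G)$ a citation of the universal property.
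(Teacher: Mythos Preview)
Your proposal is correct and matches the paper's approach: define the action on $C_c(X)$ by the convolution formula, verify the algebraic identities (adjoint, right $C_c(H)$-linearity), and obtain boundedness from the fact that each left translation $L_g$ preserves the $C^*(H)$-valued inner product thanks to the $G$-invariance of $\mu$ and the commutation of the two actions. The only difference is that where you would cite a universal property for strictly continuous unitary representations on Hilbert modules, the paper spells this step out concretely by localising at states $p$ of $C^*(H)$: each $p$ yields a genuine Hilbert-space representation $\pi_p$ of $G$ on the completion of $C_c(X)$, so $p(\scal{\phi.f}{\phi.f})=\|\pi_p(\phi)f\|_{(p)}^2\le\|\phi\|_{C^*(G)}^2\,p(\scal{f}{f})$ for every state, whence the operator inequality $\scal{\phi.f}{\phi.f}\le\|\phi\|_{C^*(G)}^2\scal{f}{f}$ in $C^*(H)$ --- which is exactly the content of the universal-property citation you had in mind.
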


\begin{proof}
The action is first given at the level of the dense subalgebra $C_c(G)$. For $f\in C_c(X)$ and $\phi\in C_c(G)$, let \[(\phi.f)(x)=\int_G\phi(g)f(g^{-1}x)\,dg\] for all $x\in X$. Thus defined, $\phi.f$ belongs to $C_c(X)$ and Proposition \ref{scal} ensures that $\scal{\phi.f}{\phi.f}$ is a positive element of $C^*(H)$. Let $p$ be a state on $C^*(H)$, and $\nu_p$ the associated positive type Radon measure on $H$. Then the same computation as in the proof of Proposition \ref{scal} shows that \[p\left(\scal{f_1}{f_2}\right)=\int_H\scal{f_1}{f_2}(h)\,d\nu_p(h)=\int_X\psi(x)\int_H\left(\tilde{f_1}_x^**\tilde{f_2}_x\right)(h)\,d\nu_p(h)\,d\mu(x),\]  for $f_1,f_2\in C_c(X)$, using the same notations as above. It follows that the map $(f_1,f_2)\mapsto p\left(\scal{f_1}{f_2}\right)$ provides an inner product on $C_c(X)$. Consider the Hilbert space obtained from $C_c(X)$ by completion with respect to the associated norm, denoted $\left\|\cdot\right\|_{(p)}$. Left translations yield a representation $\pi_p$ of $G$ on this space and the $G$-invariance of $\mu$ implies that $\pi_p$ is unitary. Moreover, if $g\to g_0$ in $G$, then $\pi_p(g)f$ uniformly converges to $\pi_p(g_0)f$, while the supports remain in a fixed compact subset of $X$, so that $\pi_p$ is strongly continuous. Still noting $\pi_p$ for the integrated form of this representation, one has
\[p\left(\scal{\phi.f}{\phi.f}\right)=\left\|\pi_p(\phi.f)\right\|_{(p)}^2\leq\left\|\phi\right\|_{C^*(G)}^2\left\|\pi_p(f)\right\|_{(p)}^2=\left\|\phi\right\|_{C^*(G)}^2p\left(\scal{f}{f}\right).\] Since this inequality holds for any state $p$ of $C^*(H)$, it follows that \[\scal{\phi.f}{\phi.f}\leq\left\|\phi\right\|_{C^*(G)}^2\scal{f}{f}\] in $C^*(H)$. Straightforward computations finally show that the left action of $C^*(G)$ commutes to the right action of $C^*(H)$, and $\scal{\phi.f_1}{f_2}=\scal{f_1}{\phi^*.f_2}$ so that $C^*(G)$ acts by adjointable operators and the map $C^*(G)\rightarrow\mathcal{L}_{C^*(H)}\left(\EX\right)$ is a morphism of $C^*$-algebras.
\end{proof}

\begin{example}
If $X=G$, then by construction, the module $\EG$ is  the induction module $E_H^G$ introduced in \cite{RieffelIRC*A} by Rieffel.
\end{example}

\begin{example}
If $H=\left\{1\right\}$, then $\EX\simeq L^2(X,\mu)$, with the regular representation of $C^*(G)$ coming from the action of $G$ on $X$.
\end{example}

\begin{example}
If $X$ is reduced to a point, then $\EX$ may be identified to $C^*(H)$ considered as a Hilbert module over itself, $C^*(G)$ acting trivially.
\end{example}

The last two examples are in fact extreme cases of the following result, which describes $\EX$ when $X$ comes as the product of some topological space with the group acting on the right. This will also be the case in Section \ref{open}, when dealing with quotients of the open cell of some Bruhat decomposition.

\begin{theorem}\label{BxH}
Let $B$ be a paracompact Hausdorff space with a Borel measure $db$ and $H$ a locally compact group. Consider $X=B\times H$ with the action of $H$ given by right translations on itself and equipped with a measure of the form $d\mu(b,h)=\eta(h)\,db\,dh$ where $\eta$ is a continuous morphism from $H$ to $\R_+^*$. Then \[\EX\simeq L^2(B)\otimes C^*(H).\]
\end{theorem}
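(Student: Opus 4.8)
The plan is to produce an explicit unitary identifying the completion of $C_c(B\times H)$ with $L^2(B)\otimes C^*(H)$, and then check it is a morphism of Hilbert $C^*(H)$-modules intertwining the $C^*(G)$-action. First I would write down the candidate map on the dense subspace $C_c(B)\odot C_c(H)\subset C_c(B\times H)$ of elementary tensors $f\otimes\varphi$, where $(f\otimes\varphi)(b,h)=f(b)\varphi(h)$. The normalising factor $\gamma_{X,H}$ for $X=B\times H$ must be computed: since $d\mu(b,g.h)=\eta(h)\,db\,dh$ is right-translation invariant in the $H$-variable after absorbing $\eta$, one finds $\delta_X=\Delta_H$ up to the contribution of $\eta$, and so $\gamma_{X,H}$ is an explicit power of $\Delta_H$ and $\eta$; keeping careful track of this is essential and is where most of the routine computation lives. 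On elementary tensors the map should send $f\otimes\varphi$ to $f\otimes(\text{a suitably normalised version of }\varphi)$, viewing the right factor as sitting in $C^*(H)$.

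Next I would compute the $C_c(H)$-valued inner product $\scal{f_1\otimes\varphi_1}{f_2\otimes\varphi_2}$ using the formula of Proposition \ref{scal}. The integral over $X=B\times H$ factors: the $B$-integral yields the scalar $\int_B\overline{f_1(b)}f_2(b)\,db=\scal{f_1}{f_2}_{L^2(B)}$, and the $H$-integral, after inserting $\gamma_{X,H}$ and the factor $\eta$, must collapse to the convolution product $\varphi_1^* * \varphi_2$ in $C_c(H)$ (possibly after the normalisation applied in the previous step). Thus $\scal{f_1\otimes\varphi_1}{f_2\otimes\varphi_2}=\scal{f_1}{f_2}_{L^2(B)}\cdot(\varphi_1^**\varphi_2)$, which is exactly the $C^*(H)$-valued inner product on $L^2(B)\otimes C^*(H)$ on elementary tensors. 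This shows the map is isometric on the dense subspace and hence extends to an isometry of Hilbert modules; surjectivity follows because $C_c(B)\odot C_c(H)$ is dense in $L^2(B)\otimes C^*(H)$ and its image is dense in $\EX$ by density of $C_c(B\times H)$ and Stone--Weierstrass-type density of elementary tensors. Finally I would check compatibility with the right $C^*(H)$-action (immediate from the inner-product computation and Proposition \ref{scal}) and describe the transported left $C^*(G)$-action: the $G$-action on $X=B\times H$ affects only the $B$-variable (and possibly twists by a cocycle valued in $H$ via the second factor), so under the identification it becomes the evident action on $L^2(B)$ tensored with left multiplication-type terms on $C^*(H)$.

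The main obstacle I anticipate is purely bookkeeping: correctly identifying the character $\delta_X$ and hence $\gamma_{X,H}$ for the measure $d\mu(b,h)=\eta(h)\,db\,dh$, and verifying that the normalising powers of $\Delta_H$ and $\eta$ conspire so that the $H$-integral in the inner product becomes \emph{exactly} convolution in $C_c(H)$ with no leftover factor — this is the point where a stray $\eta$ or $\Delta_H^{1/2}$ would break the isometry. A secondary subtlety is justifying that the isometry of pre-Hilbert modules extends to a unitary of completions and is surjective onto $\EX$; here one invokes that an inner-product-preserving map with dense range between Hilbert modules is automatically a unitary isomorphism (see \cite{Lance}), so it suffices to check density of the image, which reduces to approximating arbitrary elements of $C_c(B\times H)$ by finite sums of elementary tensors in the module norm — standard since the module norm is dominated by a sup-type norm on compact sets.
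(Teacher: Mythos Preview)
Your approach is correct and essentially the same as the paper's: the paper simply makes the bookkeeping explicit by observing that $\delta_X=\eta\Delta_H$, hence $\gamma_{X,H}=\sqrt{\eta}$, and then defining $P(f\otimes g)(b,h)=\eta(h)^{-1/2}f(b)g(h)$, after which the inner-product identity $\scal{P(f\otimes g)}{P(f\otimes g)}=\|f\|_2^2\,g^*g$ and the density argument proceed exactly as you outline. One small remark: the theorem statement involves no $G$-action at all (that is handled separately in Remark~\ref{rkBxH}), so your final paragraph is extraneous---and the claim that a $G$-action on $B\times H$ would affect only the $B$-variable is in general false, as Proposition~\ref{leftactionopenpict} later illustrates.
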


\begin{proof}
First notice that the particular form of the action of $H$ on $X$ implies that $\delta_X=\eta\Delta_H$, whence $\gamma_{X}=\sqrt{\eta}$. For $(f,g)\in C_c(B)\times C_c(H)$ and $(b,h)\in X$, let $P(f,g)(b,h)=\eta(h)^{-\frac{1}{2}}f(b)g(h)$. Then $P$ factorises through the algebraic tensor product $C_c(B)\otimes C_c(H)$ and the existence of a continuous partition of unity for $B$ implies that the range of $P$ is uniformly dense in $C_c(X)$. Simple caculations show that $P(f\otimes g*\varphi)=P(f\otimes g).\varphi$ for any $\varphi\in C_c(H)$, implying $C_c(H)$-linearity for $P$. One also has \[\scal{P(f\otimes g)}{P(f\otimes g)}=\left\|f\right\|_2^2\,g^*g\] in $C^*(H)$, so that $P$ extends to an isometry between the Hilbert modules $L^2(B)\otimes C^*(H)$ and $\EX$ with dense range, which ends the proof.
\end{proof}

\begin{remark}\label{rkBxH}
The previous proposition extends to the case where $B\times H$ is a subset of $X$ such that $\mu(X\setminus B\times H)=0$. In such a case, although the action of $G$ on $X$ might not restrict to an action on $B\times H$, the isometry between $\EX$ and $L^2(B)\otimes C^*(H)$ allows to define an action of $C^*(G)$ on the latter.
\end{remark}

\section{\texorpdfstring{The Hilbert module $\E$}{The Hilbert module E(G/N)}}\label{E(G/N)}

\subsection{Setting and notations}

In what follows, $G$ is a connected semi-simple Lie group with finite center. Let $K$ be a maximal compact subgroup in $G$, with Lie algebra $\k$. The Lie algebra $\g$ of $G$ admits Cartan decomposition $\g=\k+\p$, which determines a Cartan involution $\theta$. Let $\g=\k+\ap+\np$ be an Iwasawa decomposition of $\g$ and $G=K\Ap\Np$ the corresponding Iwasawa decomposition of $G$. The centraliser and the normaliser of $\Ap$ in $K$ are respectively denoted by $\Mp$ and $\M'p$. The group $B=\Mp\Ap\Np$ is called \textit{standard minimal parabolic subgroup} of $G$ and any closed subgroup $P$ of $G$ which is the normaliser of its Lie algebra and contains a conjugate of $B$ is called a \textit{parabolic subgroup}. Let $N$ be the \textit{unipotent radical} of such a subgroup, that is a maximal connected normal subgroup of $P$ consisting of unipotent elements. Let $L=P\cap\theta(P)$ be the \textit{$\theta$-stable Levi component} of $P$. It is a closed reductive subgroup such that $LN$ maps diffeomorphically to $P$. Let $A$ be a maximal connected split abelian subgroup in the center of $L$, and $M=\bigcap_{\chi}\Ker\left|\chi\right|$, where $\chi$ runs over the continuous homomorphisms $\chi:L\rightarrow\R^*$. Then $L=MA$. The decomposition $P=(M\times A)\ltimes N$ is called the \textit{Langlands decomposition} of $P$. Respectively denoting $\a$ and $\nk$ the lie algebras of $A$ and $N$, it is possible to choose an ordering on the $\a$-roots such that $\nk$ is the sum of the root-spaces associated to positive roots. Denoting $\g_\lambda$ the root-space associated to a root $\lambda$, let $\rho$ be the half sum of positive roots counted with multiplicity: $\rho=\frac{1}{2}\sum_{\lambda>0}\dim(\g_\lambda)\lambda$. Since $N$ is nilpotent and $L$ is reductive, they are both unimodular and the Haar measure $dp$ on $P=L\ltimes N$ decomposes into $dp=dl\,dn$. Finally, let $\nb$ denote the image $\theta\nk$ of $\nk$ under the Cartan involution and $\N$ be the corresponding analytic subgroup.

\subsection{The module \E}

The central object in this paper is the Hilbert module obtained by performing the construction of the previous section in the case of Lie groups $G$ and $L$ satifying the assumptions in the above setting, acting on the coset space $X=G/N$. As a consequence of Iwasawa decomposition, it is possible to write $G=KMAN$. Notice that $M\cap MAN = K\cap M$ is compact, and that the parts of the decomposition of an element in $G$ relatively to $KM$, $A$ and $N$ are unique. It follows from smoothness of the Iwasawa decomposition that the right action of $L$ on $G/N$ is free and proper. The coset space $(G/N)/L$ identifies to the flag variety $G/P$, hence is a compact manifold. Since $N$ is nilpotent and $G$ semisimple, these groups are both unimodular. It implies the existence of a $G$-invariant measure $\mu$ on $G/N$, unique up to normalisation. Exploiting the choice of a maximal compact subgroup $K$ of $G$ and once again the decomposition $G=KP$ and the fact that $N$ is normal in $P$, we may identify $G/N$ to $K\times M\times A$ as a topological space, and $\mu$ to $e^{2\rho\log (a)}dk\,dm\,da$.

The last identification immediately implies $L$-relative invariance for $\mu$ and allows to compute $\delta_{G/N}$. We recover these facts below, without using any choice of a maximal compact subgroup. In order to do so, let us first recall some classical notation.

\begin{notation}
Following \cite{BourbakiInt}, if $\Gamma$ is a topological group and $\sigma\in\text{Aut}(\Gamma)$, the \textit{modular function} of $\sigma$, denoted $\mathop{\rm{mod}}^\Gamma(\sigma)$, or $\mathop{\rm{mod}}(\sigma)$ when no confusion may result, is defined by the equality, $\int_\Gamma f\circ\sigma =\mathop{\rm{mod}}(\sigma)^{-1}\int_\Gamma f$, holding for any integrable function $f$. If a group $\Gamma'$ admits a subgroup $\Gamma$ which is normalised by an element $g\in\Gamma'$, we write $c_g(\gamma)=g\gamma g^{-1}$ for every $\gamma\in\Gamma$. In the case of an inner automorphism $c_\gamma$ of $\Gamma$, it follows from the definitions that $\mathop{\rm{mod}}^{\Gamma}(c_\gamma)=\Delta_\Gamma(\gamma^{-1})$.
\end{notation}

\begin{proposition}
The unique $G$-invariant measure $\mu$ on $G/N$ is relatively invariant with respect to the right action of $L$ and $\delta_{G/N}(l)=e^{2\rho\log(a)}$ for $l=ma\in L$.
\end{proposition}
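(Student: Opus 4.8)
The plan is to verify the two claimed properties of $\mu$ directly from the structure of $P=L\ltimes N$, using the modular-function formalism recalled in the preceding notation, so that no reference to the compact picture $G/N\simeq K\times M\times A$ is needed. First I would observe that since $\mu$ is $G$-invariant it is in particular invariant under left translation by $L\subset G$, so the relative invariance character $\delta_{G/N}$ of the \emph{right} action of $L$ on $G/N$ coincides with the modular behaviour coming purely from how right multiplication by $l\in L$ interacts with the quotient by $N$. Concretely, for $l\in L$ and $x=gN\in G/N$, one has $x.l=glN=g\,l\,N = g\,(l N l^{-1})\,l = g N l$ because $N$ is normal in $P$ and $l$ normalises $N$; the point is that translating the $N$-coset representative on the right by $l$ amounts, after pushing $l$ past $N$, to the automorphism $c_l$ of $N$ composed with honest right translation. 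I would make this precise by choosing the identification $G/N \cong G\times_{P} (P/N) = G\times_P L$ implicit in $P=LN$ diffeomorphically, under which the right $L$-action is the obvious one twisted by how $L$ acts on the fibre $P/N\cong L$.

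Next I would compute the Jacobian. Write the $G$-invariant measure locally as the product of a measure along the flag variety $G/P$ (the base of the fibration $G/N\to G/P$) and a measure along the fibre $L\cong P/N$. Right translation by $l$ fixes the base and acts on each fibre; the distortion it introduces is exactly $\mathop{\rm{mod}}^{P/N}$ of the relevant automorphism, which, because $P=LN$ with $N$ normal, equals $\mathop{\rm{mod}}^{P}(c_l)\cdot \mathop{\rm{mod}}^{N}(c_l)^{-1}$. Since $P$ is not unimodular one has $\mathop{\rm{mod}}^P(c_l)=\Delta_P(l^{-1})$, and the well-known formula for the modular function of a parabolic is $\Delta_P(ln) = e^{-2\rho\log a}$ for $l=ma$ (this is standard: $\Delta_P$ is trivial on $MN$ since $M$ and $N$ are unimodular and $N$ is normal, and on $A$ it is the determinant of $\mathrm{Ad}(a)$ acting on $\nk$, namely $e^{2\rho\log a}$). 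Meanwhile $\mathop{\rm{mod}}^N(c_l) = |\det \mathrm{Ad}(l)|_{\nk}| = e^{2\rho\log a}$ as well, by definition of $\rho$ as the half-sum of the $\a$-roots on $\nk$ with multiplicity and since $M$ acts on $\nk$ by elements of a compact group (so with unit determinant). Combining, the fibrewise distortion under right translation by $l$ is $\Delta_P(l^{-1})\cdot e^{-2\rho\log a} = e^{2\rho\log a}\cdot e^{-2\rho\log a}$; I need to be careful here about which side the automorphism falls on, and I expect the upshot to be $\delta_{G/N}(l)=e^{2\rho\log a}$, matching the claim. Relative invariance itself then follows because $\delta_{G/N}$ is a continuous homomorphism $L\to\R_+^*$ (it factors through $L/MN_0$-type data, i.e.\ through $\log\circ\,|{\cdot}|_A$), so $d\mu(x.l)=\delta_{G/N}(l)\,d\mu(x)$ with this $\delta$.

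I would present this as a short computation: fix a Bruhat-type local trivialisation $U\times L \hookrightarrow G/N$ with $U\subset G/P$ open, write $\mu|_{U\times L} = \beta \otimes \lambda$ with $\lambda$ a Haar-type measure on the fibre $L$ (which here, because $L=MA$ is unimodular, is just $dm\,da$ up to the fibrewise twist), and track how right translation by $l=ma$ changes $\lambda$; all of the analysis reduces to the pair of modular-function identities above. As a consistency check I would note this recovers the stated identification $\mu \leftrightarrow e^{2\rho\log a}\,dk\,dm\,da$ on $K\times M\times A$: on that chart the right $L$-action is not simply right translation in the $MA$ factor (the $K$-coordinate gets dragged along), but restricting attention to the way the density $e^{2\rho\log a}$ transforms gives precisely $\delta_{G/N}(ma)=e^{2\rho\log a}$.

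The main obstacle, and the only place genuine care is required, is bookkeeping the direction of the automorphisms and the left/right conventions: the relative-invariance convention in Section~\ref{construction} is $d\mu(g.x.h)=\delta_X(h)\,d\mu(x)$, so I must make sure the automorphism $c_l$ (versus $c_{l^{-1}}$) and the relation $\mathop{\rm{mod}}^\Gamma(c_\gamma)=\Delta_\Gamma(\gamma^{-1})$ are applied so that the two $e^{\pm 2\rho\log a}$ contributions combine to $e^{+2\rho\log a}$ rather than cancelling to $1$ — the cancellation-to-$1$ answer would correspond to the fibre measure $dl$ itself being $L$-invariant, whereas the correct statement is that it is the \emph{full} measure $\mu$ on $G/N$, built with the $e^{2\rho\log a}$ density, that transforms by $\delta_{G/N}$. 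Keeping the fibre density separate from the Haar measure on $L$ is what makes the signs come out right.
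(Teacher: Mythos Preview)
Your computation does not close. You arrive at $\Delta_P(l^{-1})\cdot\mathop{\rm{mod}}^N(c_l)^{-1}=e^{2\rho\log a}\cdot e^{-2\rho\log a}=1$, notice that this is wrong, and then assert that bookkeeping will repair it. It will not: the quantity $\mathop{\rm{mod}}^{P/N}(c_l)$ you are computing genuinely equals $1$, because $P/N\cong L$ is unimodular and $c_l$ is an inner automorphism of it. The error is conceptual, not a sign. Right translation by $l$ on $G/N$ restricts on each fibre $gP/N\cong L$ to \emph{right translation} $l'\mapsto l'l$, not to conjugation $c_l$; your passage from ``right action on the fibre'' to ``$\mathop{\rm{mod}}^{P/N}(c_l)$'' is where the argument breaks. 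And right translation on $L$ preserves Haar measure, so in your local product $\beta\otimes\lambda$ with $\lambda=dl$ you would again get $\delta=1$. The reason the true $\delta_{G/N}$ is nontrivial is that the $G$-invariant $\mu$ is \emph{not} locally $\beta\otimes dl$; the fibre carries the density $e^{2\rho\log a'}\,dl'$, and it is this density, not any automorphism modulus, that produces $e^{2\rho\log a}$ under right translation. But you have no independent access to that density without either invoking the $K\times M\times A$ picture you wished to avoid, or doing the computation the paper does. (Incidentally, your consistency check is also off: in the $KMA$ chart the $K$-coordinate is \emph{not} dragged along by right $L$-translation; $kmaN\cdot l_0=k(mm_0)(aa_0)N$ exactly.)

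The paper sidesteps the fibre-measure issue entirely. Using only unimodularity of $G$ and $N$ and the quotient integration formula $\int_G f=\int_{G/N}\big(\int_N f(gn)\,dn\big)\,d\mu(\dot g)$, one gets in two lines that $\delta_{G/N}(l)=\mathop{\rm{mod}}^N(c_l)$: write $f(gl^{-1}n)=f(g\,c_{l^{-1}}(n)\,l^{-1})$, substitute in the inner integral, and use right-invariance of $dg$. No knowledge of how $\mu$ decomposes over $G/P$ is needed. The remaining identification $\mathop{\rm{mod}}^N(c_l)=|\det\Ad(l)|_{\nk}|=e^{2\rho\log a}$ is the Lie-algebra computation you already have. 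If you want to salvage your approach, the cleanest route is exactly this: show $\delta_{G/N}(l)=\mathop{\rm{mod}}^N(c_l)$ directly, which is a one-line consequence of $l^{-1}n=c_{l^{-1}}(n)\,l^{-1}$, rather than trying to factor $\mu$ through $G/P$.
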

\begin{proof}
We first prove that $\delta_{G/N}(l)=\mathop{\rm{mod}}^N(c_l)$ for any $l=ma\in L$. For $f$ integrable function and $\dot{g}$ the class in $G/N$ of $g\in G$, define $F(\dot{g}) = \int_N f(gn)\,dn$ so that our normalisations of Haar measures give $\int_G f = \int_{G/N}F\,d\mu$. Thus,
\begin{eqnarray*}
\delta_{G/N}(l)\int_G f(g)\,dg &  = & \int_{G/N} F(\dot{g}l^{-1})\,d\mu(\dot{g}) = \int_{G/N}\int_N f(gl^{-1}n)\,dn\,d\mu(\dot{g})\\
& = &\mathop{\rm{mod}}(c_l)\int_{G/N}\int_N f(g n l^{-1})\,dn\,d\mu(\dot{g})\\
& = &\mathop{\rm{mod}}(c_l)\int_G f(gl)\,dg = \mathop{\rm{mod}}(c_l)\int_G f(g)\,dg,
\end{eqnarray*}
hence the first part of the result and the expected equality. The rest of the proof reduces to the classical Lie algebra computation leading to the modular functions of parabolic subgroups. Identifying $N$ to its Lie algebra \textit{via} the exponential map, one only needs to compute the jacobian determinant $\left|\det(\Ad(l)|_\nk)\right|$. The properties of Langlands decomposition imply that $M$ is the product of a closed subgroup of $K$, hence compact and a connected reductive group with compact center. It follows that $\left|\det(\Ad(m)|_\nk)\right|=1$ for any $m\in M$. Finally, an element $a\in A$ acts on the root space $\g_\lambda$ by $e^{\lambda\log (a)}$, so that $\left|\det(\Ad(a)|_\nk)\right|=e^{2\rho \log(a)}$, which concludes the proof.
\end{proof}

The data of \[G\curvearrowright(G/N,\mu)\curvearrowleft L\] hence satisfies the assumptions required in the previous section. Let us now turn to the properties of $\E$, starting with the fact that it induces the $P$-series representations.

\subsection{Specialisation}\label{specialisation}

As explained in Section \ref{intro}, Rieffel's modules allow to recover the particular induced representations by tensoring with the Hilbert space of the inducing representation. This specialisation procedure is still available using the $C^*(L)$-module $\E$. The existence of the corresponding maps essentially relies on the following result, relating $\E$ to Rieffel's module $E_P^G=\mathcal{E}_P(G)$.

\begin{proposition}\label{Prop_EG_EG/N}
There is an isometric isomorphism of $C^*(L)$-Hilbert modules \[\EL\simeq\EP\otimes C^*(L),\] that intertwines the left actions of $C^*(G)$ on both sides.
\end{proposition}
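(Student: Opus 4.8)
The plan is to exhibit the isomorphism by comparing the two sides with the model of Theorem~\ref{BxH}, exploiting the Bruhat-type decomposition $G = \N P$ (equivalently $G/N = \N L$ up to a measure-zero set). First I would recall that, since $P = LN$ diffeomorphically and $G = KP$, the subset $\N L \subset G/N$ is open and its complement has $\mu$-measure zero; more precisely the product map $\N \times L \to G/N$, $(\bar n, l) \mapsto \overline{\bar n l}$, is a diffeomorphism onto a dense open conull subset. Under this identification the right $L$-action on $G/N$ becomes right translation on the $L$-factor, and the $G$-invariant measure $\mu$ pulls back to a measure of the form $\eta(l)\, d\bar n\, dl$ for a suitable positive character $\eta$ of $L$ (computed from the Jacobian of $\Ad$, exactly as in the proposition preceding this one). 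Remark~\ref{rkBxH} then applies with $B = \N$ and $H = L$, giving an isometric isomorphism $\EL \simeq L^2(\N) \otimes C^*(L)$ carrying the left $C^*(G)$-action across.

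Next I would run the same analysis on Rieffel's module $\EP = \mathcal{E}(G)$, the $C^*(P)$-module built from $G$ acting on itself. Here the relevant fibration is $G \to G/P$ with the Bruhat cell $\N P \subset G$ open and conull, and $\N \times P \to G$ a diffeomorphism onto it; right translation by $P$ becomes right translation on the $P$-factor. By Theorem~\ref{BxH} (via Remark~\ref{rkBxH}) this yields $\EP \simeq L^2(\N) \otimes C^*(P)$, again intertwining the left $C^*(G)$-action. Now I would tensor on the right by $C^*(L)$ over $C^*(P)$, where $C^*(L)$ is regarded as a $C^*(P)$-bimodule via the quotient morphism $P \to P/N = L$ (which induces $C^*(P) \to C^*(L)$, or rather one uses the fact that $L$ is a quotient of $P$ by the normal subgroup $N$). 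Since $C^*(P) \otimes_{C^*(P)} C^*(L) \simeq C^*(L)$, this gives $\EP \otimes C^*(L) \simeq L^2(\N) \otimes C^*(L)$, compatibly with the left $C^*(G)$-action. Composing with the isomorphism from the previous paragraph produces the desired isometric $C^*(G)$-equivariant isomorphism $\EL \simeq \EP \otimes C^*(L)$.

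The main obstacle I anticipate is making the two appearances of $L^2(\N) \otimes C^*(L)$ genuinely \emph{the same} identification, so that composing them is legitimate and the left $C^*(G)$-actions really do match: one must check that the diffeomorphism $\N \times L \xrightarrow{\sim} \N L \subset G/N$ is compatible, under the projection $G \to G/N$, with the diffeomorphism $\N \times P \xrightarrow{\sim} \N P \subset G$ followed by the quotient $P \to L$, and that the two characters $\eta$ (one on $L$ from the $G/N$ computation, one arising from the $P$-side after passing to the quotient) coincide — this is the genuine content and requires the cocycle/Jacobian bookkeeping of the Bruhat decomposition, i.e. keeping careful track of $e^{2\rho\log a}$ factors and of the modular functions $\Delta_P$, $\Delta_L$, $\delta_{G/N}$. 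A secondary technical point is verifying that the left $C^*(G)$-action on $L^2(\N) \otimes C^*(L)$ defined through Remark~\ref{rkBxH} is, on the nose, the one obtained from the $\EP$ picture tensored with $C^*(L)$; this follows because in both cases the action is, at the dense level, left translation in the $\N$-variable twisted by the appropriate cocycle valued in $L$, but it should be spelled out on $C_c$-level elements before extending by continuity and the universal property of $C^*(G)$.
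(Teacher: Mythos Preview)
Your approach is correct but takes a genuinely different route from the paper's. The paper constructs the isomorphism directly by an averaging map: setting $M_N\colon C_c(G)\to C_c(G/N)$, $M_N(f)(\dot g)=\int_N f(gn)\,dn$, and writing $\varepsilon_N\colon C^*(P)\twoheadrightarrow C^*(L)$ for the analogous surjection, it defines $A(f\otimes\varphi)=M_N(f).\varphi$ and checks the two identities $M_N(f.\alpha)=M_N(f).\varepsilon_N(\alpha)$ and $\scal{M_Nf_1}{M_Nf_2}_{\E}=\varepsilon_N\bigl(\scal{f_1}{f_2}_{\EP}\bigr)$, the latter reducing to the equality of characters $\gamma_{G,P}|_L=\gamma_{G/N,L}$. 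This is intrinsic: no decomposition of $G$ is chosen, and $G$-equivariance is immediate because $M_N$ manifestly commutes with left translation.

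Your route via the open Bruhat cell and Theorem~\ref{BxH} essentially anticipates the paper's later Theorem~\ref{Thm_Open} (the open picture of $\E$) and runs the same argument one level up on $G$ itself to get $\EP\simeq L^2(\N)\otimes C^*(P)$. What you gain is a single concrete model $L^2(\N)\otimes C^*(L)$ for both sides; what you pay is exactly the bookkeeping you flag---matching the two characters $\eta$, and checking that the two transported $C^*(G)$-actions on $L^2(\N)\otimes C^*(L)$ agree. That last point is not automatic from Remark~\ref{rkBxH}, which only asserts that \emph{some} action exists on the target, not that two such transports coincide; you would have to verify that the square built from $\N\times P\to G\to G/N$ and $\N\times P\to \N\times L\to G/N$ commutes and push this through the isometries $P$ of Theorem~\ref{BxH}. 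The paper's averaging map sidesteps all of this and, in addition, does not rely on Proposition~\ref{OBC} (conullity of the open cell), which in the paper's logical order appears only later.
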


\begin{proof}
Let $M_N$ denote the averaging map defined by $M_N(f)(\dot{g})=\int_N f(gn)\,dn$ for any function $f$ in $C_c(G)$ and $\dot{g}\in G/N$ the class of $g\in G$. A similarly defined map on $C_c(P)$ extends to a surjection $\varepsilon_N:C^*(P)\twoheadrightarrow C^*(L)$. Let $\alpha\in C_c(P)\subset C^*(P)$, and $f\in C_c(G)\subset\EG$. Using the decomposition of measure $dp=dl\,dn$ it is easily seen that \[M_N(f.\alpha)=M_N(f).\varepsilon_N(\alpha)\label{MN}\tag{$*$}\] in $\E$. Now for $l=ma\in L$, \[\gamma_{G,P}(l)=\sqrt{\frac{\Delta_G}{\Delta_P}}(l)=\Delta_P^{-\frac{1}{2}}(l)=e^{\rho\log(a)}=\delta_{G/N}^{\frac{1}{2}}(l)=\sqrt{\frac{\delta_{G/N}}{\Delta_L}}(l)=\gamma_{G/N,L}(l).\]
Then it follows from a straightforward computation that \[\scal{M_N(f_1)}{M_N(f_2)}_{\E}=\varepsilon_N\left(\scal{f_1}{f_2}_{\EG}\right)\label{proj_scal}\tag{$**$}\] in $C^*(L)$ for every $f_1,f_2\in C_c(G)$. Recall that the $C^*(L)$-valued inner product on $\EG\otimes C^*(L)$ is defined on elementary tensors by \[\scal{f_1\otimes\varphi_1}{f_2\otimes\varphi_2}=\scal{\varphi_1}{\varepsilon_N\left(\scal{f_1}{f_2}_{\EG}\right)\varphi_2}_{C^*(L)}=\varphi_1^*\varepsilon_N\left(\scal{f_1}{f_2}_{\EG}\right)\varphi_2.\]
Let $A$ be defined on $C_c(G)\times C_c(L)$ by $A(f,\varphi)=M_N(f).\varphi$. Equality (\ref{MN}) proves that $A$ factorises through $C_c(G)\otimes C_c(L)$, while (\ref{proj_scal}) implies that \[\scal{A(f_1\otimes\varphi_1)}{A(f_2\otimes\varphi_2)}_{\E}=\scal{f_1\otimes\varphi_1}{f_2\otimes\varphi_2}_{\EG\otimes C^*(L)}.\]
Since $M_N$ is onto, it follows that $A$ also has dense range and the above equality shows that it extends to the expected isometric isomorphism.
\end{proof}

\begin{remark}
The modules $E_P^G\otimes C^*(L)$ were used in \cite{Pierrot} by F. Pierrot. The interest of the approach to $\E$ as a special case of the general construction of Section \ref{construction} is so far two-fold: first, it leads to the convenient realisation of $\E$ given in Theorem \ref{Thm_Open} through the use of the general result describing $\EX$ when $X=B\times H$ (Proposition \ref{BxH}). Another interest of seeing $\E$ as a completion of $C_c(G/N)$ is that it makes it possible to directly define intertwining integrals similar to the ones considered by A. W. Knapp and E. M. Stein in \cite{KS1, KS2} without needing any of the meromorphic continuation argument used by these authors, as will be seen in Section \ref{intertwine}. 
\end{remark}

Let us now establish the existence of specialisation maps.

\begin{corollary}[Specialisation]
For $(\sigma,\chi)\in\widehat{M}\times\widehat{A}$, let $\mathcal{H}_{\sigma\otimes\chi}$ be the Hilbert space of the representation 
$\sigma\otimes\chi\otimes\rm{1}$ of $P$, and $\mathcal{H}_{\sigma,\chi}^P$ the space of the induced representation \[\pi^P_{\sigma,\chi}=\ind{P}{G}{\sigma\otimes\chi\otimes\rm{1}}.\] There exists a map \[q_{\sigma,\chi}:\EL\otimes_{C^*(L)}\mathcal{H}_{\sigma\otimes\chi}\longrightarrow\mathcal{H}_{\sigma,\chi}^P\] unitarily intertwining the actions of $C^*(G)$ on these Hilbert spaces.
\end{corollary}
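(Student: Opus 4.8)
The plan is to exploit Proposition \ref{Prop_EG_EG/N} to reduce the statement to the analogous fact about Rieffel's module $\EP$, which is contained in (or immediately derived from) \cite{RieffelIRC*A}. Concretely, the isometric isomorphism $\EL\simeq\EP\otimes C^*(L)$ intertwining the $C^*(G)$-actions allows us to write
\[
\EL\otimes_{C^*(L)}\mathcal{H}_{\sigma\otimes\chi}\;\simeq\;\bigl(\EP\otimes C^*(L)\bigr)\otimes_{C^*(L)}\mathcal{H}_{\sigma\otimes\chi}\;\simeq\;\EP\otimes_{C^*(L)}\mathcal{H}_{\sigma\otimes\chi},
\]
where the last identification uses the canonical isomorphism $(E\otimes C^*(L))\otimes_{C^*(L)}V\simeq E\otimes_{C^*(L)}V$ for a $C^*(L)$-module $E$ and a $C^*(L)$-representation space $V$.

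First I would make precise how $\mathcal{H}_{\sigma\otimes\chi}$ becomes a $C^*(L)$-module: since $\sigma\otimes\chi\otimes\mathbf{1}$ is inflated from $L=MA$ along $P\twoheadrightarrow L$, the representation $\sigma\otimes\chi$ of $L$ integrates to a $*$-representation of $C^*(L)$ on $\mathcal{H}_{\sigma\otimes\chi}$, and this is compatible with the surjection $\varepsilon_N:C^*(P)\twoheadrightarrow C^*(L)$ in the sense that the $C^*(P)$-action on $\mathcal{H}_{\sigma\otimes\chi}$ obtained from $\sigma\otimes\chi\otimes\mathbf{1}$ factors through $\varepsilon_N$. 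This is exactly what makes the interior tensor product over $C^*(P)$ (as in Rieffel's construction) agree with the one over $C^*(L)$. Then I would invoke the relevant result of the Introduction: Rieffel's module $E_H^G=\mathcal{E}_H(G)$ satisfies, for any representation $(\rho,\mathcal{H}_\rho)$ of $H$, that there is a $C^*(G)$-equivariant isometry $E_H^G\otimes_{C^*(H)}\mathcal{H}_\rho\to\mathcal{H}_{\ind{H}{G}{\rho}}$. Applying this with $H=P$ and $\rho=\sigma\otimes\chi\otimes\mathbf{1}$ produces the desired map $q_{\sigma,\chi}$ after transport along the isomorphisms above.

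The key steps, in order: (1) record that $\mathcal{H}_{\sigma\otimes\chi}$ carries compatible $C^*(P)$- and $C^*(L)$-module structures via $\varepsilon_N$; (2) establish the general Hilbert-module identity $(\EP\otimes C^*(L))\otimes_{C^*(L)}\mathcal{H}_{\sigma\otimes\chi}\simeq\EP\otimes_{C^*(P)}\mathcal{H}_{\sigma\otimes\chi}$, checking that it is $C^*(G)$-equivariant (the $C^*(G)$-action only touches the $\EP$ factor, so this is formal once the inner products are matched); (3) combine with Proposition \ref{Prop_EG_EG/N} to get a $C^*(G)$-equivariant identification of $\EL\otimes_{C^*(L)}\mathcal{H}_{\sigma\otimes\chi}$ with $\EP\otimes_{C^*(P)}\mathcal{H}_{\sigma\otimes\chi}$; (4) invoke Rieffel's specialisation map $\EP\otimes_{C^*(P)}\mathcal{H}_{\sigma\otimes\chi}\to\mathcal{H}^P_{\sigma,\chi}$ and define $q_{\sigma,\chi}$ as the composite. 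Alternatively, and perhaps more transparently, one can define $q_{\sigma,\chi}$ directly at the level of elementary tensors by $q_{\sigma,\chi}(f\otimes v)(g)=\int$-type formulas using the identification $G/N\simeq K\times M\times A$ and then verify equivariance and the isometry property by the same Bruhat-section computation used in Proposition \ref{scal}; I would mention this as a remark but carry out the argument via the reduction, since it is shorter.

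The main obstacle I anticipate is step (2): one must be careful that the interior tensor product $\EP\otimes_{C^*(P)}\mathcal{H}_{\sigma\otimes\chi}$ is formed using the $C^*(P)$-action on $\mathcal{H}_{\sigma\otimes\chi}$ that factors through $\varepsilon_N$, and then verify that the $C^*(L)$-valued inner product appearing in Proposition \ref{Prop_EG_EG/N}, namely $\varphi_1^*\varepsilon_N(\scal{f_1}{f_2}_{\EP})\varphi_2$, produces exactly the scalar product $\bigl(\sigma\otimes\chi(\varepsilon_N\scal{f_1}{f_2}_{\EP})v_1,v_2\bigr)$ upon pairing with vectors $v_i\in\mathcal{H}_{\sigma\otimes\chi}$ — which is immediate once one remembers that $\sigma\otimes\chi\circ\varepsilon_N$ is the integrated form of the $C^*(P)$-representation. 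After that bookkeeping, everything is routine: the density of ranges is inherited from surjectivity of $M_N$ and from Rieffel's construction, and $C^*(G)$-equivariance holds because all identifications are built from $C^*(G)$-equivariant maps.
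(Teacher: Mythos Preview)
Your proposal is correct and follows essentially the same route as the paper: use Proposition~\ref{Prop_EG_EG/N} to identify $\EL\otimes_{C^*(L)}\mathcal{H}_{\sigma\otimes\chi}$ with $\bigl(\EP\otimes_{C^*(P)}C^*(L)\bigr)\otimes_{C^*(L)}\mathcal{H}_{\sigma\otimes\chi}$, collapse the tensor products by associativity (using that the $C^*(P)$-action on $\mathcal{H}_{\sigma\otimes\chi}$ factors through $\varepsilon_N$), and then apply Rieffel's specialisation map for $\EP$. The paper's proof is just a terser version of your steps (1)--(4); your extra bookkeeping about $\varepsilon_N$ and the inner products is sound but not strictly needed once associativity is invoked.
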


\begin{proof}
Associativity of the tensor product allows to reduce the proof to applying the similar result obtained by Rieffel in the case of classical induction Hilbert modules. Proposition \ref{Prop_EG_EG/N} provides a unitary equivalence \[\EL\otimes_{C^*(L)}\mathcal{H}_{\sigma\otimes\chi}\simeq\left(\EP\otimes_{C^*(P)}C^*(L)\right)\otimes\mathcal{H}_{\sigma\otimes\chi}.\] The result follows from composing this isomorphism with the specialisation maps of \cite[Theorem 5.12 p.228]{RieffelIRC*A}.
\end{proof}

\begin{remark}
Letting $P$ be a cuspidal parabolic subgroup and $\sigma$ run over $\Md$, it appears that, although $\E$ is a Hilbert module over $C^*(L)$, it still `contains' the $P$-series representations of $G$, in the sense that it induces all of them. 
\end{remark}

We now turn to other realisations of $\E$, encoding some classical features of $P$-series representations.

\section{Different pictures}\label{pictures}

Notations in this section are the same as in the previous one.

\subsection{Induced picture}\label{induced}

According to the usual definition, essentially due to G. Mackey for locally compact groups, induced representations act on spaces of sections of equivariant fiber bundles. In particular, for $(\sigma,\chi)$ in $\Md\times\widehat{A}$, the $P$-series representation $\pi^P_{\sigma,\chi}$ acts on the space of $L^2$-sections of the fiber product $G\times_{\sigma\otimes\chi\otimes\rm{1}}\mathcal{H}_{\sigma\otimes\chi\otimes\rm{1}}$ over the flag manifold $G/P$. The trivial behaviour of the inducing parameter on $N$ allows to consider sections of the $G$-equivariant bundle
\[\xymatrix{G/N\times_{\sigma\otimes\chi}\ar[d]\mathcal{H}_{\sigma\otimes\chi}\\
G/P}\] as the space of $\pi^P_{\sigma,\chi}$. In order to recover that point of view within the global approach provided by the module $\E$, it is tempting to try and realise it as a space of sections of the $G$-equivariant bundle
\[\xymatrix{G/N\times_L\ar[d]C^*(L)\\
G/P}\] where $C^*(L)$ can be viewed as the collection of all the Hilbert spaces $\mathcal{H}_{\sigma\otimes\chi\otimes\rm{1}}$. This realisation will be called the \textit{induced picture} of $\E$.

Let us denote by $\Eio$ the space of compactly supported continuous functions $F:G/N\rightarrow C^*(L)$ subject to the relation \[F(x.l)=e^{\rho\log(a)}U_{l^{-1}}.F(x)\] for any $x\in G/N$ and $l=ma\in L$, on which $C^*(L)$ acts by right multiplication. Let $\psi$ be a Bruhat section for the action of $L$ on $G/N$, as defined in the proof of Proposition \ref{scal}. The existence of $\psi$ follows from the (para)compactness of the coset space $(G/N)/L\simeq G/P$. A $C^*(L)$-valued inner product is defined on $\Eio$ by considering \[\scal{F_1}{F_2}_\psi=\int_{G/N}F_1(x)^*F_2(x)\psi(x)\,d\mu(x)\] for $F_1,F_2\in\Eio$. The problem of the dependence on $\psi$ is settled by the following result.

\begin{lemma}
The map $\scal{\cdot}{\cdot}_\psi$ defined above does not depend on the choice of the Bruhat section $\psi$.
\end{lemma}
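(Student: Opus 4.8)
The plan is to show that if $\psi_1$ and $\psi_2$ are two Bruhat sections for the action of $L$ on $G/N$, then $\scal{F_1}{F_2}_{\psi_1} = \scal{F_1}{F_2}_{\psi_2}$ for all $F_1, F_2 \in \Eio$. The essential point is that the integrand $x \mapsto F_1(x)^* F_2(x)$ transforms in a controlled way under the right $L$-action, so that integrating it against a Bruhat section amounts to averaging over $L$-orbits, which is canonical.

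First I would compute how $F_1(x)^* F_2(x)$ behaves under $x \mapsto x.l$ for $l = ma \in L$. Using the defining relation $F_i(x.l) = e^{\rho\log(a)} U_{l^{-1}}.F_i(x)$ and the fact that $U_{l^{-1}}$ is a unitary multiplier of $C^*(L)$ (so $(U_{l^{-1}}.v)^*(U_{l^{-1}}.w) = U_{l^{-1}}.(v^* w).U_{l^{-1}}^* = U_{l^{-1}} (v^*w) U_l$ in $\mathcal{M}(C^*(L))$), one gets
\[
F_1(x.l)^* F_2(x.l) = e^{2\rho\log(a)}\, U_{l^{-1}}\bigl(F_1(x)^* F_2(x)\bigr) U_l.
\]
Recalling that $\delta_{G/N}(l) = e^{2\rho\log(a)}$, this says precisely that the $C^*(L)$-valued function $x \mapsto F_1(x)^* F_2(x)$, twisted by the modular character, is equivariant in the right way for the relatively invariant measure $\mu$. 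The next step is the standard Bruhat-section argument from the proof of Proposition \ref{scal}: given any two Bruhat sections $\psi_1,\psi_2$, write $\psi_1(x) = \int_L \psi_1(x)\psi_2(x.l)\,dl$ using $\int_L \psi_2(x.l)\,dl = 1$, and symmetrically for $\psi_2$; then in the integral $\int_{G/N} F_1(x)^*F_2(x)\psi_1(x)\,d\mu(x)$ insert this identity, use a partition-of-unity / Fubini justification (legitimate because the supports meet $\supp\psi_i$ in compact sets), change variables $x \mapsto x.l^{-1}$ picking up the factor $\delta_{G/N}(l)^{-1}$ from $L$-relative invariance, and apply the transformation law above. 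The two modular factors cancel, and one is left with the symmetric double integral $\int_{G/N}\int_L \psi_1(x)\psi_2(x)\,U_{?}(\cdots)\,dl\,d\mu(x)$ — more precisely, after the change of variables the expression becomes manifestly symmetric in $\psi_1 \leftrightarrow \psi_2$, whence $\scal{F_1}{F_2}_{\psi_1} = \scal{F_1}{F_2}_{\psi_2}$.

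The main obstacle, as usual in these Bruhat-section computations, is bookkeeping the normalisations: making sure the exponent $e^{\rho\log(a)}$ in the equivariance relation squares to exactly $\delta_{G/N}(l)$ and that the change of variables contributes exactly $\delta_{G/N}(l)^{-1}$, so that everything cancels cleanly with no residual character; and justifying the interchange of integrations, for which one invokes the support property of Bruhat sections (that $\supp\psi_i \cap K.L$ is compact for $K \subset G/N$ compact) exactly as in Proposition \ref{scal}. Once the transformation law is in hand this is routine, so I would state the transformation law as the key lemma-within-the-proof and then refer to the argument already spelled out in Proposition \ref{scal} for the measure-theoretic justification.
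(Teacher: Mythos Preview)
Your overall strategy is sound and does lead to a valid proof, but there is a computational slip in your key transformation law. You write
\[
(U_{l^{-1}}.v)^*(U_{l^{-1}}.w) = U_{l^{-1}}(v^* w)U_l,
\]
but since $(U_{l^{-1}}v)^* = v^* U_{l^{-1}}^* = v^* U_l$, one actually has $(U_{l^{-1}}v)^*(U_{l^{-1}}w) = v^* U_l U_{l^{-1}} w = v^* w$. So the unitary conjugation disappears and the map $x\mapsto F_1(x)^*F_2(x)$ transforms by the \emph{scalar} character $e^{-2\rho\log a}=\delta_{G/N}(l)^{-1}$ alone. This is exactly what you need: after inserting $1=\int_L\psi_2(x.l)\,dl$, changing variables $x\mapsto x.l^{-1}$, and using $L$-relative invariance of $\mu$, the two factors $\delta_{G/N}(l)$ and $\delta_{G/N}(l)^{-1}$ cancel and you are left with $\int_{G/N}F_1^*F_2\,\psi_2\,(\int_L\psi_1(y.l^{-1})\,dl)\,d\mu$, which equals $\scal{F_1}{F_2}_{\psi_2}$ by unimodularity of $L$. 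Your hesitant ``$U_?(\cdots)$'' at the end reflects precisely the spurious conjugation; once it is removed the argument closes without any symmetry handwaving.

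Your route is genuinely different from the paper's. The paper does not use the insert--and--swap Bruhat-section trick at all: it invokes the Iwasawa identification $G/N\simeq K\times L$ with $d\mu = e^{2\rho\log a}\,dk\,dl$, writes $F_1(kma)^*F_2(kma)=e^{-2\rho\log a}F_1(k)^*F_2(k)$, and reduces $\int_{G/N}F_1^*F_2\,(\psi_1-\psi_2)\,d\mu$ to $\int_K F_1(k)^*F_2(k)\bigl(\int_L(\psi_1-\psi_2)(kl)\,dl\bigr)dk=0$. The paper's argument is shorter because it exploits the explicit section $K\hookrightarrow G/N$, while yours is more intrinsic (it never chooses coordinates on $G/N$) and would work verbatim in the general setting of Section~\ref{construction} for any $\EH$ once the analogue of $\Eio$ is set up.
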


\begin{proof}
Let $\psi_1$ and $\psi_2$ be two Bruhat sections on $G/N$ and $u=\psi_1-\psi_2$. For $x\in G/N$ be represented by $kma\in KMA$, the relation satisfied by any functions $F_1,F_2\in\Eio$ implies that $F_1(x)^*F_2(x)=e^{-2\rho\log(a)}F_1(k)^*F_2(k)$. It follows from the Iwasawa decomposition of the measure that
\begin{eqnarray*}
\int_{G/N}F_1(x)^*F_2(x)u(x)\,d\mu(x)&=&\int_{K\times MA}F_1(kma)^*F_2(kma)e^{2\rho\log(a)}\,dk\,dm\,da\\
&=&\int_K F_1(k)^*F_2(k)\,dk\int_{L}u(kl)\,dl\,dk
\end{eqnarray*}
The last quantity vanishes since $\int_{L}u(kl)\,dl=0$ for all $k\in K$ by definition of $\psi_1$ and $\psi_2$. It follows that $\scal{F_1}{F_2}_{\psi_1}=\scal{F_1}{F_2}_{\psi_2}$.
\end{proof}

\begin{notation}
As a consequence of the above lemma, we can denote without ambiguity $\scal{\cdot}{\cdot}_i$ the sesquilinear form on $\Eio$, regardless of the Bruhat section used to construct it.
\end{notation}

\begin{definition}[Induced picture]
The space obtained by completing $\Eio$ with respect to the norm induced by $\left|\scal{\cdot}{\cdot}_i\right|$ and denoted $\Ei$ is called the \textit{induced picture of $\E$}.
\end{definition}

The $C^*(L)$-module $\Ei$ carries a left action of $C^*(G)$, defined by convolution at the level of compactly supported functions, in the same way as the one of Proposition \ref{Prop_leftaction}.

\begin{proposition}
The map $f\mapsto\tilde{f}$ defined on the dense subset $C_c(G/N)$ of $\E$ by \[\tilde{f}(x)(l)=e^{\rho\log(a)}f(x.l)\] for $x\in G/N$ and $l=ma\in L$, takes values in $\Eio$. It is $C_c(L)$-linear and preserves the $C^*(L)$-valued inner products.
\end{proposition}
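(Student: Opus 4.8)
The plan is to verify three things in sequence: that $\tilde f$ lands in $\Eio$, that the map is $C_c(L)$-linear, and that it preserves the inner products. For the first point, I would fix $f\in C_c(G/N)$ and check the equivariance relation defining $\Eio$. For $l_0=m_0a_0\in L$ and $x\in G/N$, one computes $\widetilde f(x.l_0)(l) = e^{\rho\log(a)}f(x.l_0.l)$ where if we write the product $l_0l = m'a'$, then $a' = a_0 a$ (since $A$ is central in $L$ and $L=MA$ with the $A$-part multiplicative). So $\widetilde f(x.l_0)(l) = e^{-\rho\log(a_0)}\,e^{\rho\log(a_0 a)}f(x.(l_0l)) = e^{-\rho\log(a_0)}\widetilde f(x)(l_0 l)$, which by definition of the multiplier $U_{l_0^{-1}}$ (extending left translation, $U_{l_0^{-1}}.\varphi = \varphi(l_0\,\cdot\,)$ on $C_c(L)$) equals $e^{-\rho\log(a_0)}(U_{l_0^{-1}}.\widetilde f(x))(l)$. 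Comparing with the required relation $F(x.l)=e^{\rho\log(a)}U_{l^{-1}}.F(x)$, I should double-check the sign convention on $\rho$ versus the one appearing in $\delta_{G/N}^{1/2}(l)=e^{\rho\log(a)}$; the computation above produces $e^{-\rho\log(a_0)}$, and consistency with $\gamma_{G/N,L}$ needs to be reconciled — likely the stated relation in $\Eio$ should be read with $\delta_{G/N}^{1/2}$, and I would phrase the check so the bookkeeping matches the normalisation fixed earlier. Continuity and compact support of $\widetilde f(\cdot)$ as a $C^*(L)$-valued function follow from continuity of $f$ and properness of the $L$-action (so that the support of $f$ meets each $L$-orbit in a compact set), exactly as in the Bruhat-section discussion.

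For $C_c(L)$-linearity, I would take $\varphi\in C_c(L)$ and compare $\widetilde{f.\varphi}$ with $\widetilde f.\varphi$. Unwinding the definition of the right $C_c(L)$-action on $C_c(G/N)$ from Section \ref{construction} (with $H=L$, $\gamma_{G/N,L} = e^{\rho\log(\cdot)}$ on $L$, and $L$ unimodular so $\Delta_L\equiv 1$), $(f.\varphi)(x) = \int_L \gamma_{G/N,L}(l)^{-1} f(x.l^{-1})\varphi(l)\,dl = \int_L e^{-\rho\log(a)}f(x.l^{-1})\varphi(l)\,dl$. Then $\widetilde{f.\varphi}(x)(l') = e^{\rho\log(a')}(f.\varphi)(x.l')$ with $l'=m'a'$, and expanding gives an integral that, after the change of variables $l\mapsto l'l$ on $L$ (unimodular), matches $(\widetilde f(x) * \varphi)(l') = (\widetilde f(x).\varphi)(l')$ in $C_c(L)\subset C^*(L)$. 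This is the routine calculation flagged in the proof of Theorem \ref{BxH} and Proposition \ref{Prop_EG_EG/N}; I would just exhibit the change of variables and let the exponential factors cancel.

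For preservation of inner products, I would evaluate $\scal{\widetilde{f_1}}{\widetilde{f_2}}_i$ using \emph{any} Bruhat section $\psi$ for $L\curvearrowright G/N$ (legitimate by the preceding Lemma): $\scal{\widetilde{f_1}}{\widetilde{f_2}}_i = \int_{G/N}\widetilde{f_1}(x)^*\widetilde{f_2}(x)\,\psi(x)\,d\mu(x)$, an element of $C^*(L)$. Evaluating this $C^*(L)$-element at $l=ma\in L$ means pairing against functions via the $*$-algebra structure; more concretely I would show the two sides agree as functions on $L$ after identifying $C_c(L)\subset C^*(L)$. On the right, $\scal{f_1}{f_2}_{\E}(l) = \gamma_{G/N,L}(l)\int_{G/N}\overline{f_1(x)}f_2(x.l)\,d\mu(x) = e^{\rho\log(a)}\int_{G/N}\overline{f_1(x)}f_2(x.l)\,d\mu(x)$. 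On the left, $\widetilde{f_1}(x)^*\widetilde{f_2}(x)$ is the element of $C_c(L)$ given by $l\mapsto \int_L \overline{\widetilde{f_1}(x)(l_1)}\,\widetilde{f_2}(x)(l_1 l)\,dl_1 = \int_L e^{\rho\log(a_1)}\overline{f_1(x.l_1)}\,e^{\rho\log(a_1 a)}f_2(x.l_1 l)\,dl_1$; multiplying by $\psi(x)$, integrating over $G/N$, and using the defining property $\int_L \psi(x.l_1)\,dl_1 = 1$ together with the relative-invariance $d\mu(x.l_1) = \delta_{G/N}(l_1)\,d\mu(x) = e^{2\rho\log(a_1)}\,d\mu(x)$ to absorb the $l_1$-integral, collapses the double integral to $e^{\rho\log(a)}\int_{G/N}\overline{f_1(x)}f_2(x.l)\,d\mu(x)$ — the same computation pattern as in the proof of Proposition \ref{scal} (the $\psi$ and $d\mu$ manipulations are literally the Bruhat-section trick). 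Hence $\scal{\widetilde{f_1}}{\widetilde{f_2}}_i = \scal{f_1}{f_2}_{\E}$ in $C^*(L)$.

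The main obstacle I anticipate is not any single hard estimate but getting every normalisation consistent simultaneously: the sign/placement of $\rho$ in the defining relation of $\Eio$ versus in $\gamma_{G/N,L}$ and $\delta_{G/N}$, the convention for $U_{l^{-1}}$, and the direction of the change of variables on $L$ must all line up so that the exponential weights cancel exactly and no stray $\delta_{G/N}$ or $\Delta_L$ survives. I would therefore do the $\Eio$-membership computation first and \emph{read off} from it the correct form of the defining relation, then carry that convention through the linearity and inner-product checks. Once the bookkeeping is pinned down, each step is a short direct computation of the type already performed in Sections \ref{construction} and \ref{specialisation}.
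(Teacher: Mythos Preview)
Your proposal is correct and follows essentially the same route as the paper: verify the equivariance relation directly, declare $C_c(L)$-linearity a straightforward change-of-variables computation, and compute the inner product via a Bruhat section, absorbing the $l_1$-integral through the relative invariance $d\mu(x.l_1)=e^{2\rho\log(a_1)}d\mu(x)$ and the normalisation $\int_L\psi(x.l)\,dl=1$. Your caution about the sign of $\rho$ is well placed---the paper's own proof in fact obtains $\tilde f(xl)=e^{-\rho\log(a)}U_{l^{-1}}\tilde f(x)$, so the exponent in the stated defining relation of $\Eio$ appears to carry the wrong sign; your strategy of fixing the convention from the first computation and propagating it is exactly the right way to handle this.
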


\begin{proof}
Let $f\in C_c(G/N)$, $x\in G/N$ and $l_0=m_0a_0,l=ma\in L$. Then $\tilde{f}(x)\in C_c(L)$ and $\tilde{f}(xl)(l_0)=e^{\rho\log (a_0)}f(xll_0)$. Since \[\left[U_{l^{-1}}\tilde{f}(x)\right](l_0)=\tilde{f}(x)(ll_0)=e^{\rho\log (aa_0)}f(xll_0),\] the expected relation $\tilde{f}(xl)=e^{-\rho\log (a)}U_{l^{-1}}\tilde{f}(x)$ holds, implying that $\tilde{f}\in\Eio$. The $C_c(L)$-linearity follows from a straightforward computation. We prove the isometry property: for $f_1,f_2\in C_c(G/N)$,
\begin{eqnarray*}
\scal{\widetilde{f_1}}{\widetilde{f_2}}_i(l_0)&=&e^{\rho\log(a_0)}\int_{G/N}\int_L\overline{\widetilde{f_1}(x)(l)}\widetilde{f_2}(x)(ll_0)\,dl\,\psi(x)\,d\mu(x)\\
&=&e^{\rho\log(a_0)}\int_{G/N}\int_L e^{2\rho\log (a)}\overline{f_1(xl)}f_2(xll_0)\psi(x)\,dl\,d\mu(x)\\
&=&e^{\rho\log (a_0)}\int_{G/N}\overline{f_1(x)}f_2(xl_0)\int_L \psi(xl^{-1})\,dl\,d\mu(x)\\
&=&\scal{f_1}{f_2}_{\E}(l_0)\int_L \psi(xl)\,dl = \scal{f_1}{f_2}_{\E}(l_0)
\end{eqnarray*}
where $\psi$ is any Bruhat section on $G/N$.
\end{proof}

The map of the above statement has dense range in $\Ei$, as we see by considering $F\in\Eio$ such that $F(x)\in C_c(L)$ for $x\in G/N$: then $F=\tilde{u}$ where $u:x\mapsto F(x)(1)$. The properties of $C^*(L)$-sesquilinearity and positivity of $\scal{\cdot}{\cdot}_i$ can be obtained as consequences of the previous proposition, and the following theorem holds as an immediate corollary.

\begin{theorem}
There is an isometric isomorphism of $C^*(L)$-Hilbert modules \[\E\simeq\Ei.\] Moreover, the left action of $C^*(G)$ on $\Ei$ given by convolution coincides with the one obtained by transporting it from $\E$ \textit{via} this isomorphism.
\end{theorem}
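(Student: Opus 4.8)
The plan is to combine the preceding proposition with the two general functoriality facts about multiplier modules recalled in the introduction, together with the observation about density already made in the text. First I would record the essential content: the map $f\mapsto\tilde f$ is a $C_c(L)$-linear, inner-product-preserving map from the dense submodule $C_c(G/N)\subset\E$ into $\Eio$, and the paragraph following the proposition shows its range contains every $F\in\Eio$ with $F(x)\in C_c(L)$, a set that is itself dense in $\Ei$. Hence $f\mapsto\tilde f$ extends by continuity to an isometry $\Phi:\E\to\Ei$ with dense range; an isometry of Hilbert $C^*$-modules with dense range is automatically surjective (its range is closed, being the isometric image of a complete space) and adjointable with $\Phi^*\Phi=\mathrm{id}$, $\Phi\Phi^*=\mathrm{id}$. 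This gives the isometric isomorphism of $C^*(L)$-Hilbert modules $\E\simeq\Ei$.

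Next I would treat the intertwining claim. Both $\E$ and $\Ei$ carry left actions of $C^*(G)$ defined on compactly supported functions by the same convolution formula $(\phi.f)(x)=\int_G\phi(g)f(g^{-1}x)\,dg$, as in Proposition \ref{Prop_leftaction}; it therefore suffices to check that $\Phi$ intertwines these two actions on the dense subspace $C_c(G/N)$, where everything is a genuine integral and no completion is involved. So I would compute $\widetilde{\phi.f}(x)(l)=e^{\rho\log(a)}(\phi.f)(x.l)=e^{\rho\log(a)}\int_G\phi(g)f(g^{-1}x.l)\,dg$ and compare it with $\bigl(\phi.\tilde f\bigr)(x)(l)=\int_G\phi(g)\,\tilde f(g^{-1}x)(l)\,dg=\int_G\phi(g)e^{\rho\log(a)}f(g^{-1}x.l)\,dg$; these are visibly equal, using only that the right $L$-action and the left $G$-action on $G/N$ commute and that the factor $e^{\rho\log(a)}$ depends only on the $L$-coordinate, not on $g$. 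By density and boundedness of both actions this identity propagates to all of $\E$, giving $\Phi(\phi.\xi)=\phi.\Phi(\xi)$ for $\phi\in C^*(G)$, $\xi\in\E$, which is exactly the assertion that the convolution action on $\Ei$ coincides with the transported action.

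The only genuinely non-routine point is the well-definedness and boundedness of the convolution action of $C^*(G)$ on $\Ei$ in the first place — but this is obtained exactly as in Proposition \ref{Prop_leftaction}, or, more cheaply, one can simply \emph{define} the $C^*(G)$-action on $\Ei$ to be the one transported via $\Phi$ and then observe, via the computation above on $C_c(G/N)$, that it agrees with convolution; the text's phrasing ("the left action on $\Ei$ given by convolution coincides with the one obtained by transporting") suggests the convolution action has already been introduced, so I would just cite that. I expect the main obstacle, such as it is, to be purely bookkeeping: making sure the two actions being compared are literally the same formula on the common dense subspace $C_c(G/N)$, so that the comparison reduces to the one-line identity above rather than to any analysis. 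Everything else — $C_c(L)$-linearity, positivity and sesquilinearity of $\scal{\cdot}{\cdot}_i$, density of the range — has already been established in the proposition and the remark preceding the theorem, so the proof is essentially a two-sentence assembly: extend $\Phi$ by continuity using the isometry property, invoke density of the range for surjectivity, and check intertwining on $C_c(G/N)$.
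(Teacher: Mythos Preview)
Your proposal is correct and follows exactly the paper's approach: the paper states the theorem as an immediate corollary of the preceding proposition (isometry and $C_c(L)$-linearity of $f\mapsto\tilde f$) together with the density observation, and you have simply spelled out those two sentences in full, including the intertwining check on $C_c(G/N)$ which the paper leaves implicit.
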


\begin{remark}
It is clear from the definition and Iwasawa decomposition that functions in $\Eio$ are determined by their restriction to $K$. It makes it possible to obtain a \textit{compact picture} of $\E$ as the completion of a space of functions $K\rightarrow C^*(L)$.
\end{remark}

\subsection{Open picture}\label{open}

The classical so-called \textit{open} or \textit{noncompact} picture of $P$-series representations (see \cite{Knapp1}) allows to realise all these representations on Hilbert spaces which do not depend on the representation $\sigma$ of $M$ in the inducing parameter. The crucial observation is the following consequence of the Bruhat decomposition of $G$ (see for instance \cite{KnappBeyond}):

\begin{proposition}[Open Bruhat cell]\label{OBC}
The set $\N MAN$ is open in $G$ and its complement has Haar measure $0$.
\end{proposition}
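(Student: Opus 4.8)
The plan is to recall the Bruhat decomposition of $G$ relative to the parabolic $P$ and its opposite $\bar P=L\bar N$, and then translate this into a statement about Haar measure. First I would fix the setup: the Bruhat decomposition asserts that $G$ is the disjoint union $\bigsqcup_{w\in W(A)\backslash W/W(A)} P w P$ of double cosets indexed by (appropriate) Weyl group elements, where $W=W(G,A_\p)$ and $W(A)$ is the subgroup fixing $\a$ pointwise (equivalently the Weyl group of the Levi $M$); among these there is a unique \emph{open} double coset, namely the big cell, which can be written as $\bar N P = \bar N MAN$, and every other double coset is a submanifold of strictly smaller dimension. Since $G/P$ is the flag manifold and $\bar N \hookrightarrow G/P$, $\bar n \mapsto \bar n P$, is a diffeomorphism onto the open dense big cell, openness of $\bar N MAN = \bar N P$ in $G$ follows by pulling back along $G\to G/P$, which is a submersion with fibre $P$: the preimage of an open set is open. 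This gives the first assertion.

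For the measure-zero statement, the complement $G\setminus \bar N MAN$ is the preimage under $G\to G/P$ of the complement of the big cell in $G/P$, which is a finite union of lower-dimensional submanifolds (the non-open Bruhat cells), hence a closed set with empty interior and — being a finite union of embedded submanifolds of positive codimension — of measure zero for the smooth measure class on $G/P$. I would then invoke the fact that the $G$-invariant measure on $G/P$ (which exists up to scalar, $G$ being unimodular and $P$ having modular function $\Delta_P$; more precisely one works with the relatively invariant measure or simply with the smooth measure class, which is all that is needed) assigns measure zero to any such lower-dimensional subset, and that Haar measure on $G$ disintegrates over $G/P$ against Haar measure on the fibre $P$ (using $dg = $ const $\cdot\, e^{-2\rho\log a}\,dp\,d(\bar n)$-type local product structure coming from $G=\bar N P$ locally). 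Consequently the preimage in $G$ of a null set in $G/P$ is null in $G$, which is exactly the claim.

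Alternatively, and perhaps more directly, one can argue entirely upstairs: the map $\bar N \times M \times A \times N \to G$, $(\bar n, m, a, n)\mapsto \bar n m a n$, is a diffeomorphism onto the open set $\bar N MAN$ (injectivity and smoothness of the inverse being the content of the open-cell statement), so $\bar N MAN$ is a full-dimensional open submanifold, while its complement, being the union of the lower-dimensional Bruhat cells, is a finite union of smooth submanifolds of $G$ each of dimension strictly less than $\dim G$; such a set is closed (it is the complement of an open set) and Lebesgue-null in every chart, hence of Haar measure zero. I would cite \cite{KnappBeyond} for the precise form of the Bruhat decomposition relative to a non-minimal parabolic and for the identification of the big cell with $\bar N P$.

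The one genuine point requiring care — the \textbf{main obstacle}, though it is more bookkeeping than difficulty — is making sure the Bruhat decomposition is invoked in the correct generality: the cited references typically state it for \emph{minimal} parabolics (in terms of $B=M_\p A_\p N_\p$ and the full Weyl group $W(G,A_\p)$), whereas here $P$ is a general (cuspidal) parabolic with Levi $L=MA$. One must either quote the version for arbitrary standard parabolics directly, or deduce it by observing that $\bar N P \supseteq \bar N_\p B$ and that the minimal open cell is dense, so its closure's complement — a fortiori the complement of the larger set $\bar N P$ — is null; and that $\bar N P$ is precisely the union of those minimal-parabolic double cosets lying over the open $P$-cell in $G/P$. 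Once this identification is pinned down, openness is immediate from submersivity of $G\to G/P$ and the null-complement from the codimension count, so no further estimates are needed.
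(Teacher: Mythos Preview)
Your argument is correct and is essentially an elaboration of exactly what the paper does: the paper does not prove Proposition~\ref{OBC} at all but simply records it as ``a consequence of Bruhat decomposition of $G$ (proved for instance in \cite{KnappBeyond})''. Your sketch supplies the details behind that citation --- identifying $\bar N P$ as the preimage of the big cell under the submersion $G\to G/P$, and observing that the remaining Bruhat cells have positive codimension --- and your remark about passing from the minimal to the general parabolic case is a sensible caution, though in the paper's actual applications (Section~\ref{RR1} onward) $P$ is minimal anyway.
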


The above fact is reflected by an isomorphism between $\E$ and the tensor product of a Hilbert space by the right-acting $C^*$-algebra:

\begin{theorem}[Open picture]\label{Thm_Open}
There is an isometric isomorphism of $C^*(L)$-Hilbert modules \[\E\simeq L^2(\N)\otimes C^*(L).\]
\end{theorem}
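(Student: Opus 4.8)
The plan is to combine Proposition~\ref{OBC} with Theorem~\ref{BxH} (in the form of Remark~\ref{rkBxH}). Since $\N MAN$ is open in $G$ with complement of Haar measure zero, its image $\N MA \cong \N L$ under the projection $G \to G/N$ is an open subset of $G/N$ whose complement has $\mu$-measure zero (the fibration $G \to G/N$ being compatible with the measures). The key structural observation is that the multiplication map $\N \times L \to \N L \subset G/N$, $(\bar n, l) \mapsto \overline{\bar n l}$, is a diffeomorphism onto this open dense set: injectivity and surjectivity follow from uniqueness in the Bruhat decomposition $\N MAN$, and smoothness of the inverse follows from smoothness of the Bruhat coordinates. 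Under this identification the right action of $L$ on $\N L$ is exactly the right-translation action of $L$ on the second factor of $\N \times L$.

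Next I would compute the measure. Pulling back $\mu$ along the diffeomorphism $\N \times L \xrightarrow{\sim} \N L$, I claim it takes the form $d\mu(\bar n, l) = \eta(l)\, d\bar n\, dl$ for a suitable continuous homomorphism $\eta: L \to \R_+^*$. This is forced by two facts: first, $\mu$ is left $G$-invariant, in particular left $\N$-invariant, which pins down the $\N$-dependence to be a fixed Haar measure $d\bar n$ on $\N$ independent of $l$; second, $\mu$ is $L$-relatively invariant with character $\delta_{G/N}$, which together with the fact that right translation by $L$ acts purely on the $L$-factor forces the $l$-dependence to be $\eta(l)\,dl$ with $\eta\,\Delta_L = \delta_{G/N}$, i.e. $\eta(ma) = e^{2\rho\log a}\Delta_L(ma)^{-1}$; since $L$ is unimodular this is just $\eta(ma) = e^{2\rho\log a}$, which is indeed a continuous homomorphism $L \to \R_+^*$. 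Alternatively one can read $\eta$ off directly from the Jacobian of the Bruhat coordinate change, but the invariance argument is cleaner.

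With $B = \N$, $H = L$, $db = d\bar n$ and this $\eta$, the hypotheses of Theorem~\ref{BxH} are met on the open dense subset $\N L$ of $G/N$. By Remark~\ref{rkBxH}, since $\mu\big((G/N)\setminus \N L\big) = 0$, the module $\E$ is unchanged by restricting attention to $\N L$, and Theorem~\ref{BxH} yields the isometric isomorphism of $C^*(L)$-Hilbert modules $\E \simeq \mathcal{E}_L(\N \times L) \simeq L^2(\N) \otimes C^*(L)$.

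The main obstacle is the measure-theoretic bookkeeping in the second step: one must verify carefully that the pulled-back measure genuinely splits as a product $d\bar n\,dl$ up to the scalar $\eta(l)$, i.e. that there is no residual coupling between the $\N$- and $L$-directions. The invariance constraints make this essentially automatic, but writing it rigorously requires keeping track of how left $\N$-translation and right $L$-translation act in the Bruhat coordinates — translation on the left by $\bar n_0 \in \N$ is not simply translation of the first coordinate unless one is careful, because $\bar n_0 \cdot (\bar n l) = (\bar n_0 \bar n) l$ does in fact respect the coordinates, so this is fine, but one should double-check that $\N$-invariance is what forces the $\N$-marginal to be a single fixed Haar measure. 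Everything else — the density statement, the verification of the hypotheses of Theorem~\ref{BxH}, and the final assembly — is routine given the results already established.
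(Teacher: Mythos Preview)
Your proposal is correct and follows essentially the same approach as the paper: apply Theorem~\ref{BxH} via Remark~\ref{rkBxH} with $B=\N$ and $H=L$, using Proposition~\ref{OBC} to know that the complement of $\N L$ in $G/N$ has $\mu$-measure zero. The paper's proof is terser and simply asserts the applicability of Theorem~\ref{BxH}, exhibiting the dense submodule of functions $\bar n\,ma\mapsto e^{-\rho\log a}f(\bar n)\varphi(ma)$; your additional care in deriving $\eta(ma)=e^{2\rho\log a}$ from the invariance properties of $\mu$ is a welcome clarification but not a different argument.
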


\begin{proof} It is a straightforward consequence of Theorem \ref{BxH} and Remark \ref{rkBxH}, applied to $B=\N$ and $H=L$, for $\N L$ has measure $0$ in $G/N$. More precisely, a dense submodule of $\E$ is obtained by considering functions of the form \[F:\bn ma\longmapsto e^{-\rho\log(a)}f(\bn)\varphi(ma),\] where $f\in C_c(\N)$ and $\varphi\in C_c(L)$.
\end{proof}

In the two following sections, we turn to applications of the point of view provided by the module $\E$ on $P$-series. Section \ref{irred} is devoted to the characterisation of bounded self-intertwiners of $\E$, interpreted as an irreducibility result. In Section \ref{intertwine}, we define intertwining integrals in the spirit of \cite{KS1} on a dense subset of $\E$.

\section{An irreducibility theorem}\label{irred}

\subsection{\texorpdfstring{Groups of real rank $1$}{Groups of real rank 1}}\label{RR1}

In this section, we will sometimes need to assume that the real rank of $G$, that is the dimension of the abelian part in the Iwasawa decomposition of $G$, is $1$. As a consequence, proper parabolic subgroups of $G$ are necessary minimal. The subgroups in the Langlands decomposition of $P=B$ are $M=M_\p$ compact, $A=A_\p\simeq\R_+^*$ and $N=N_\p$.

\begin{remark}
The $P$-series induced from a minimal parabolic subgroup are also called \textit{principal series}.
\end{remark}

We denote:
\begin{itemize}
\item $\alpha$ the smallest restricted root of $(\g:\a)$
\item $p$ and $q$ the respective dimensions of $\g_{-\alpha}$ and $\g_{-2\alpha}$ 
\end{itemize}

It follows that $\nb=\g_{-\alpha} + \g_{-2\alpha}$ and $\rho=\frac{1}{2}(p+2q)\alpha$. The automorphisms $\left\{c_a\,,\,a\in A\right\}$ of $\N$ are called \textit{dilations}.

The Weyl group $W=N_K(A)/Z_K(A)$ contains exactly one non-trivial element, denoted $w$, and the Bruhat decomposition of $G$ writes $G=PwP\sqcup P$, so that the complement of $\N MAN$ in $G$ is the class $wP$ of $w$ in $G/P$.

\begin{notation}\label{nman}
Elements in the dense subset $\left(G/N\right)\setminus wL$ may be written according to $\N MA$ in a unique way. For such an element this decomposition will be denoted \[g=\n(g)\m(g)\ab(g)n.\] We also denote $\lm(g)=\m(g)\ab(g)$.
\end{notation}

What follows essentially consists in analysing distributions on $\N$ with homogeneity properties under the action of the one-parameter group of dilations $A$. Identifying $\N$ to its Lie algebra $\nb$, this action is seen to dilate vectors with different coefficients according to root subspaces. This behaviour is taken into account by a special norm-like function on $\N$.

\begin{definition}
The \textit{norm function on $\N$} is the function defined on $\N\setminus\left\{1\right\}$ by \[|\bar{n}|=e^{\rho\log \ab(w^{-1}\bar{n})}.\]
\end{definition}

Elementary facts about this function may be found in \cite{KS1} and \cite{These}. In particular, it will be useful to know that the norm function is $C^\infty$ on $\N\setminus\left\{1\right\}$ and is continuously extended to $\N$ by setting $|1|=0$. Moreover, $|\bn^{-1}|=|\bn|$ for any $\bn\in\N$ and the measure $\frac{d\bn}{|\bn|}$ is invariant under dilations.

Another feature in real rank $1$, is the possibility to completely describe the left action of $C^*(G)$ in the open picture of $\E$. It is initially defined by convolution at the level of $C_c(G/N)$ and carried to $L^2(\N)\otimes C^*(L)$ \textit{via} the isomorphism of Theorem \ref{Thm_Open}. One may also consider the corresponding action of $G$ defined by translations on $C_c(G/N)$ and extended to $\E$. The next proposition explicits this action in the open picture. For $f\otimes\varphi$ in $L^2(\N)\otimes C^*(L)$, we write $g.f\otimes\varphi$ for the action of $g\in G$ transported from the one on $\E$. The Bruhat decomposition in rank $1$ proves that it is enough to consider $g\in\N MA$ and $g=w$.

\begin{proposition}\label{leftactionopenpict}
Let $f\otimes\varphi\in L^2(\N)\otimes C^*(L)$, $\bn_0\in\N$ and $l_0=m_0a_0\in MA$. Then,
\begin{itemize}
\item $\bn_0.(f\otimes\varphi) = \lambda_{\N}(\bn_0)(f)\otimes\varphi$
\item $l_0.(f\otimes\varphi) = e^{\rho\log(a_0)}f\circ c_{l_0^{-1}}\otimes U_{l_0}.\varphi$
\item For any $\bnu\in\N\setminus\left\{1\right\}$ and $\lambda\in L$, \[w.(f\otimes\varphi)(\bnu,\lambda)=\frac{1}{|\bnu|}f(\n(w^{-1}\bnu))\,\left(U_{\lm(w^{-1}\bnu)}\varphi\right)(\lambda).\]
\end{itemize}
\end{proposition}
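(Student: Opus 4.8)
The plan is to verify each of the three formulas by starting from the definition of the $G$-action on $C_c(G/N)$ by left translation, namely $(g.f)(x) = f(g^{-1}x)$, and then transporting through the isometry $\E\simeq L^2(\N)\otimes C^*(L)$ of Theorem \ref{Thm_Open}. Recall that this isometry identifies the elementary tensor $f\otimes\varphi$ (with $f\in C_c(\N)$, $\varphi\in C_c(L)$) with the function $F$ on $G/N$ given by $F(\bn ma) = e^{-\rho\log(a)}f(\bn)\varphi(ma)$ on the open dense set $\N MA\subset G/N$. So for each $g$ in the statement I would compute the translate $x\mapsto F(g^{-1}x)$, re-express it in the coordinates $x = \bn ma$ using the decomposition of Notation \ref{nman}, and read off the resulting $f'\otimes\varphi'$ (or, in the last case, the resulting function of $\bnu$ and $\lambda$, which need not be an elementary tensor).

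First I would do the case $g = \bn_0\in\N$. Here $g^{-1}x = \bn_0^{-1}\bn ma = (\bn_0^{-1}\bn)ma$, since $\N$ is a group; so $F(\bn_0^{-1}x) = e^{-\rho\log(a)}f(\bn_0^{-1}\bn)\varphi(ma) = e^{-\rho\log(a)}(\lambda_{\N}(\bn_0)f)(\bn)\varphi(ma)$, which is exactly the function attached to $\lambda_{\N}(\bn_0)(f)\otimes\varphi$. Next, for $g = l_0 = m_0a_0\in MA$, using that $l_0$ normalises $\N$ one writes $l_0^{-1}\bn m a = (l_0^{-1}\bn l_0)\,l_0^{-1}m a = (c_{l_0^{-1}}\bn)(m_0^{-1}m)(a_0^{-1}a)$, whence $F(l_0^{-1}x) = e^{-\rho\log(a_0^{-1}a)}f(c_{l_0^{-1}}\bn)\varphi(l_0^{-1}ma) = e^{\rho\log(a_0)}\big(e^{-\rho\log(a)}(f\circ c_{l_0^{-1}})(\bn)(U_{l_0}\varphi)(ma)\big)$; the scalar $e^{\rho\log(a_0)}$ and the translate $U_{l_0}.\varphi$ appear precisely as claimed, since $(U_{l_0}\varphi)(ma) = \varphi(l_0^{-1}ma)$. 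Both of these are essentially bookkeeping, using only that $\N$ and $L$ are subgroups with $L$ normalising $\N$ and the explicit form of $F$.

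The main obstacle is the last formula, for $g = w$, because here one must actually use the Bruhat-cell geometry: for $x = \bnu ma$ in the open cell one needs to decompose $w^{-1}x$, i.e. to write $w^{-1}\bnu$ itself according to $\N MA$ (times $N$), which is the content of Notation \ref{nman}: $w^{-1}\bnu = \n(w^{-1}\bnu)\,\m(w^{-1}\bnu)\,\ab(w^{-1}\bnu)\,n$ for some $n\in N$. Since the value of $F$ on $G/N$ only depends on the $\N MA$-part and $N$ acts trivially on the right, one gets $F(w^{-1}\bnu ma) = F\big(\n(w^{-1}\bnu)\,\lm(w^{-1}\bnu)\,ma\big)$, and then applying the $MA$-formula already proved (with $l_0 = \lm(w^{-1}\bnu)$, writing $\lm = \m\ab$) yields a factor $e^{-\rho\log\ab(w^{-1}\bnu)} = |\bnu|^{-1}$ by the definition of the norm function, a factor $f(\n(w^{-1}\bnu))$, and the term $(U_{\lm(w^{-1}\bnu)}\varphi)(\lambda)$ evaluated at $\lambda\in L$. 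This is exactly the asserted formula. I would present the $w$-case as: transport the $L$-coordinate through, invoke the definition $|\bnu| = e^{\rho\log\ab(w^{-1}\bnu)}$, and note that the resulting expression is manifestly not of elementary-tensor form, which is why it is stated pointwise in $\bnu$ and $\lambda$. Finally, since $C_c(\N)\otimes C_c(L)$ spans a dense submodule and all the maps in sight are bounded (the $G$-action is by isometries on $\E$, hence on $L^2(\N)\otimes C^*(L)$), the three formulas extend from elementary tensors to all of $L^2(\N)\otimes C^*(L)$, as claimed; the only point needing a word of care is the measure-theoretic one that $\N L$ has measure zero in $G/N$, already handled in Theorem \ref{Thm_Open} and Remark \ref{rkBxH}, so the restriction of $F$ to the open cell determines it.
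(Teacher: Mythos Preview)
Your proposal is correct and is precisely the computation the paper has in mind: its own proof is the single sentence ``It directly follows from the form of the isomorphism in Proposition~\ref{Thm_Open},'' and you have simply unpacked that sentence by writing $g^{-1}x$ in $\N MA$-coordinates for each of $g=\bn_0$, $g=l_0$, $g=w$ and reading off the result through the map $P(f\otimes\varphi)(\bn ma)=e^{-\rho\log a}f(\bn)\varphi(ma)$. The only minor remark is that your appeal to ``the $MA$-formula already proved'' in the $w$-case is really just a direct evaluation of $F$ at $\n(w^{-1}\bnu)\,\lm(w^{-1}\bnu)\lambda$ rather than an application of the $l_0$-action formula, but the outcome is the same.
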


\begin{proof}
It follows directly from the definition of the isomorphism in Proposition \ref{Thm_Open}.
\end{proof}

Let us now turn to the main result of this section.

\subsection{Bounded self-intertwiners}

\begin{definition}
Let $A$ be a $C^*$-algebra. An element $M$ in the multiplier algebra $\mathcal{M}(A)=\mathcal{L}_A(A)$ is said to be \textit{central} if it satisfies the relation \[M(ab)=aM(b)\] for any $a,b\in A$.
\end{definition}

\begin{remark}
Using approximate units, the algebra of central multipliers of $A$ can be proved to coincide with the center of $\mathcal{M}(A)$.
\end{remark}

\begin{theorem}\label{Thm_Irred}
The elements of $\mathcal{L}_{C^*(L)}(\E)$ which commute to the left action of $C^*(G)$ are exactly the central multipliers of $C^*(L)$.
\end{theorem}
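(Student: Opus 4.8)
The plan is to work in the open picture $\E \simeq L^2(\N)\otimes C^*(L)$ of Theorem \ref{Thm_Open}, where the left action of $C^*(G)$ was made completely explicit in Proposition \ref{leftactionopenpict}. One inclusion is easy: a central multiplier $M$ of $C^*(L)$ acts on $L^2(\N)\otimes C^*(L)$ as $1\otimes M$, and inspecting the three formulas of Proposition \ref{leftactionopenpict} shows this commutes with the action of $\N MA$ (where $M$ only touches the $C^*(L)$-leg via $U_{l_0}$, and centrality of $M$ makes $U_{l_0}(Mv) = M(U_{l_0}v)$ since $U_{l_0}$ is itself a multiplier of $C^*(L)$) and with the action of $w$ (same reason). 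So $1\otimes M \in \mathcal{L}_{C^*(L)}(\E)$ commutes with $C^*(G)$.

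The substantive direction is: every $T\in\mathcal{L}_{C^*(L)}(\E)$ commuting with $C^*(G)$ has the form $1\otimes M$ for a central multiplier $M$ of $C^*(L)$. First I would use commutation with the subgroup $\N$ acting by $\lambda_{\N}(\bn_0)\otimes 1$ on the $L^2(\N)$-leg. Since $T$ is $C^*(L)$-linear and commutes with all left translations on $L^2(\N)$, a Schur-type / commutant argument (the commutant of the left regular representation of $\N$ on $L^2(\N)$, inside the $C^*(L)$-linear adjointable operators, consists of right-convolution-type operators, i.e. $L(\N)$-module maps) should force $T$ to act only through the $C^*(L)$-factor together with right $\N$-translations — more precisely $T$ lies in $\mathcal{L}_{C^*(L)}\big(L^2(\N)\otimes C^*(L)\big)$ commuting with $\lambda_{\N}\otimes 1$. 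Then I would bring in the dilations $c_a$ ($a\in A$), which act by $f\mapsto e^{\rho\log a}f\circ c_{a^{-1}}$ tensored with $U_a$ on the right leg: commutation with these rescales and pins down the $\N$-dependence, and commutation with $M$ acting by $f\circ c_{m^{-1}}\otimes U_m$ handles the compact part. The combined effect of commuting with all of $\N MA$ should already show $T = 1\otimes M_0$ for some $M_0\in\mathcal{L}_{C^*(L)}(C^*(L)) = \mathcal{M}(C^*(L))$. Finally, commutation with the Weyl element $w$, whose action involves $U_{\lm(w^{-1}\bnu)}$ on the $C^*(L)$-leg, forces $M_0$ to commute with the multipliers $U_l$ for all $l\in L$; since these generate enough of $\mathcal{M}(C^*(L))$ (they act by left translation, and their span is strictly dense), this yields centrality of $M_0$, i.e. $M_0 \in Z(\mathcal{M}(C^*(L)))$, which by the preceding remark is the algebra of central multipliers of $C^*(L)$.

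The main obstacle I anticipate is the first reduction step: rigorously identifying the $C^*(L)$-linear adjointable operators on $L^2(\N)\otimes C^*(L)$ that commute with $\lambda_{\N}\otimes 1$. One cannot naively invoke the scalar commutant theorem because the coefficients live in a $C^*$-algebra; the clean way is to test against the states of $C^*(L)$ — for each state (or each irreducible representation) $\pi$ of $C^*(L)$, localize to get an operator on $L^2(\N)\otimes \mathcal{H}_\pi$ commuting with $\lambda_{\N}\otimes 1$, apply the Hilbert-space commutation theorem there to see it is of the desired form, and then patch these together using that $T$ is a genuine adjointable module map (so the family is compatible and bounded). Handling the passage between the multiplier picture and honest operators on the module — in particular checking $U_l$ are the right generators and that strict density suffices to conclude centrality — is the other place where care is needed, but it is essentially bookkeeping once the $\N$-commutation step is settled. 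The $w$-step is where centrality (as opposed to mere membership in $\mathcal{M}(C^*(L))$) is genuinely used, so it should be performed last and with the explicit formula for the $w$-action in hand.
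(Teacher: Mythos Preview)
Your easy direction is fine and matches the paper. The hard direction takes a genuinely different route and has a real gap at the step you yourself flag as the main obstacle.

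The paper does not argue via commutants or localisation at states. It associates to $T$ the $\mathcal{M}(C^*(L))$-valued bilinear form $B_T(f_1,f_2)=\scal{M(T)m_{\overline{f_1}}}{m_{f_2}}$ on $C_c(\N)\times C_c(\N)$, shows it is a Radon measure with kernel $k_T$, and uses $\N$-invariance to reduce to a single distribution $c_T(\bnu)=k_T(1,\bnu)$ on $\N$. Invariance under the $A$-action then yields the homogeneity $\scal{c_T}{\varphi\circ c_a}=\scal{c_T}{\varphi}$, and a short argument specific to real rank~$1$ (the norm function, together with the fact that $c_T$ is a Radon measure and hence of order~$0$) shows the only such distribution is a multiple of~$\delta_1$. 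This forces $B_T(f_1,f_2)=\scal{f_1}{f_2}_{L^2}\,U$ for some $U\in\mathcal{M}(C^*(L))$, and centrality of $U$ then falls out of the \emph{right} $C^*(L)$-linearity of $T$. Neither the $M$-action nor the Weyl element $w$ is used.

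Your plan, by contrast, hinges on the assertion that commutation with $\N MA$ already yields $T=1\otimes M_0$, and this is precisely where the difficulty lives. The $L$-action is $D_l\otimes U_l$, mixing the two legs, so commutation with $L$ gives no direct information about the $L^2(\N)$-factor alone; you cannot peel $D_l$ off from $U_l$. If instead you localise at an irreducible $(\sigma,\chi)$ of $L$, the resulting $T_{\sigma,\chi}$ on $L^2(\N)\otimes\mathcal{H}_\sigma$ is a self-intertwiner of the principal series $\pi^P_{\sigma,\chi}$, and concluding it is scalar \emph{is} Bruhat's irreducibility theorem at that fibre---valid only generically, and in any case exactly the statement being globalised here. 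One can then push through with a continuity argument across $\widehat{A}$ to handle reducible fibres, but this imports the classical result rather than reproving it at the module level as the paper's distributional argument does. Finally, your attribution of centrality to the $w$-action is misplaced: once $T=1\otimes M_0$ is known, right $C^*(L)$-linearity (equivalently, commutation with the $U_l$ already present in the $L$-action) forces $M_0$ to be central, and $w$ plays no role.
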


The method of the proof consists in associating to such an operator a bilinear form on a submodule of test functions and study the properties of the associate distributional kernel.

In what follows, all distributions take values in Banach spaces. General theory may be found in \cite{Bruhat}, as well as the next proposition, which characterises the distributions satisfying some invariance properties.

\begin{proposition}\label{distribinvar}
Let $M$ be a differentiable manifold and $\Gamma$ a Lie group with Haar measure $d\gamma$. Let $E$ be a Banach space and $T$ an $E$-valued distribution on $M\times\Gamma$. If $T$ is invariant under the transformations $(m,\gamma)\mapsto(m,\gamma_0\gamma)$, then there exists an $E$-valued distribution $S$ on $M$ such that \[\scal{T}{\varphi} = \int_{\Gamma}\scal{S}{\varphi_\gamma}\,d\gamma\] where $\varphi_\gamma:m\mapsto\varphi(m,\gamma)$ whenever $\varphi$ is a test function on $M\times\Gamma$.
\end{proposition}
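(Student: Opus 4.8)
The plan is to reduce the statement to the standard structure theorem for invariant vector-valued distributions via a disintegration of $\Gamma$-averages. First I would fix a test function $\varphi\in\test(M\times\Gamma)$ and exploit the invariance hypothesis: for every $\gamma_0\in\Gamma$ we have $\scal{T}{\varphi} = \scal{T}{\varphi\circ(\mathrm{id}\times L_{\gamma_0})}$, where $L_{\gamma_0}$ denotes left translation by $\gamma_0$ on $\Gamma$. Integrating this identity against $d\gamma_0$ over a compact exhaustion, or more cleanly testing against functions of the form $\varphi(m,\gamma) = \int_\Gamma \beta(\gamma_0)\,\alpha(m,\gamma_0^{-1}\gamma)\,d\gamma_0$, one sees that $\scal{T}{\varphi}$ depends on $\varphi$ only through the partial average $\bar\varphi(m) = \int_\Gamma \varphi(m,\gamma)\,d\gamma$. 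The key point to nail down is that $\bar\varphi\in\test(M)$ and that the map $\varphi\mapsto\bar\varphi$ is a continuous, open surjection onto $\test(M)$ admitting a continuous linear section (pick any $\chi\in\test(\Gamma)$ with $\int_\Gamma\chi = 1$ and send $u\in\test(M)$ to $(m,\gamma)\mapsto u(m)\chi(\gamma)$).

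Next I would define the candidate distribution $S$ on $M$ by $\scal{S}{u} = \scal{T}{u\otimes\chi}$ for $u\in\test(M)$, with $\chi$ as above; continuity of $S$ follows from continuity of $T$ and of the section, and $S$ is $E$-valued since $T$ is. It remains to verify the integral formula $\scal{T}{\varphi} = \int_\Gamma\scal{S}{\varphi_\gamma}\,d\gamma$. By linearity and continuity it suffices to check it on a dense subspace of $\test(M\times\Gamma)$, for instance finite sums of elementary tensors $u\otimes\beta$. For such a $\varphi = u\otimes\beta$ one computes $\varphi_\gamma = \beta(\gamma)\,u$, so the right-hand side equals $\left(\int_\Gamma\beta(\gamma)\,d\gamma\right)\scal{S}{u}$. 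On the other hand, writing $\varphi = u\otimes\beta$ and $\varphi' = u\otimes\left(\left(\int_\Gamma\beta\right)\chi\right)$, the two test functions have the same partial $\Gamma$-average, hence $\scal{T}{\varphi} = \scal{T}{\varphi'} = \left(\int_\Gamma\beta\right)\scal{T}{u\otimes\chi} = \left(\int_\Gamma\beta\right)\scal{S}{u}$, using the invariance-derived fact that $T$ factors through the average. This matches the right-hand side, and density finishes the argument.

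I expect the main obstacle to be the functional-analytic bookkeeping in the first paragraph: making rigorous the claim that $T$ descends to a functional of the $\Gamma$-average, uniformly in the relevant topologies, without circular appeal to the very conclusion being proved. The cleanest route is probably to argue directly that $\varphi - (\bar\varphi)\otimes\chi$ lies in the closed span of differences $\psi - \psi\circ(\mathrm{id}\times L_{\gamma_0})$ — this is where one genuinely uses that $\Gamma$ carries a left Haar measure and that translation is continuous on $\test(\Gamma)$ — and then invoke invariance of $T$ to kill that span. Once this density/approximation lemma is in place the rest is routine; since the excerpt defers to \cite{TheseBruhat} for the general theory, it is reasonable to cite that source for the approximation step and present only the construction of $S$ and the verification of the formula in detail.
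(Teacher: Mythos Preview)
The paper does not actually give a proof of this proposition: the sentence preceding it attributes both the general theory of Banach-space-valued distributions and this result to Bruhat's thesis \cite{TheseBruhat}, and the proposition is stated without argument. So there is no ``paper's own proof'' to compare against.

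That said, your outline is the standard argument and is essentially correct. Two comments. First, the step you flag as the main obstacle---that $\scal{T}{\varphi}$ depends only on the fiberwise average $\bar\varphi$, equivalently that any $\eta$ with $\bar\eta=0$ lies in the closed span of the differences $\psi-\psi\circ(\mathrm{id}\times L_{\gamma_0})$---is exactly the case $M=\{\mathrm{pt}\}$ of the proposition (uniqueness of Haar measure among left-invariant $E$-valued distributions on $\Gamma$). Citing \cite{TheseBruhat} for that step, as you propose, is therefore consistent with how the paper handles the matter. Second, there is a minor reorganisation that avoids the approximation lemma and the density argument on $\test(M\times\Gamma)$ altogether: for each fixed $u\in\test(M)$ the map $\beta\mapsto\scal{T}{u\otimes\beta}$ is a left-invariant $E$-valued distribution on $\Gamma$, hence equals $S(u)\int_\Gamma\beta\,d\gamma$ for some $S(u)\in E$; one then checks directly that $u\mapsto S(u)$ is an $E$-valued distribution on $M$ and that the integral formula holds on tensors, whence on all of $\test(M\times\Gamma)$ by continuity. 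This is the same content, merely packaged so that the appeal to Bruhat is isolated to the scalar case on $\Gamma$, which is precisely Remark~\ref{remdistribinvar}.
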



\begin{remark}\label{remdistribinvar}
As a special case, it follows that, up to a scalar factor, the only left-invariant distribution on a Lie group is the Haar measure.
\end{remark}

\begin{proof}[Proof of Theorem \ref{Thm_Irred}]

The right action of $C^*(L)$ extends to one of $\mathcal{M}(C^*(L))$ on $\E$ and if $T_M$ denotes right multiplication by $M\in\mathcal{M}(C^*(L))$, the centrality condition for $M$ implies $C^*(L)$-linearity for $T_M$. The fact that $T_M$ commutes to the action of $C^*(G)$ is trivial, and boundedness follows from the existence of an adjoint map for $T_M$, namely $T_{M^*}$.

Conversely, let $T\in\mathcal{L}_{C^*(L)}(\E)$, satisfying the centrality condition. Following notations introduced in Section \ref{intro}, $\mathcal{M}(\E)$ denotes the $\mathcal{M}(C^*(L))$-Hilbert module $\mathcal{L}(C^*(L),E)$. 

We shall work in the open picture of Section \ref{open}. Since $\mathcal{M}(\E)$ contains $L^2(\N)\otimes\mathcal{M}(C^*(L))$, there is an injection $L^2(\N)\hookrightarrow\mathcal{M}(\E)$ through which $f\in L^2(\N)$ is identified with the multiplier $f\otimes 1_{\mathcal{M}(C^*(L))}$, also denoted $m_f$, so that $m_{f\otimes a}=m_f m_a$, for any $a\in C^*(L)$, using notations of Section \ref{generalnotations}. It also follows that \[M(T)(m_{f\otimes a}) = M(T)(m_f)m_a,\] hence for $f_1\otimes a_1,f_2\otimes a_2\in\E$, \[\scal{M(T)(m_{f_1\otimes a_1})}{m_{f_2\otimes a_2}} = m_{a_1^*}\scal{M(T)(m_{f_1})}{m_{f_2}}m_{a_2}.\]

Let us now consider the map $B_T:C_c(\N)\times C_c(\N)\rightarrow\mathcal{M}(C^*(L))$ defined by
\[B_T(f_1,f_2) = \scal{M(T)(m_{\overline{f_1}})}{m_{f_2}},\]
and prove that it is a Radon measure on $\N\times\N$. Let $K$ be a compact subset in $\N\times\N$, and $f_1,f_2\in C_c(\N)$ such that $\supp{f_1}\times\supp{f_2}\subset K$. Recall that if $E$ is a Hilbert module over a $C^*$-algebra $A$, the identity
$\left|\scal{\xi}{\eta}\right|_A\leq \|\xi\|\scal{\eta}{\eta}^{\frac{1}{2}}$ holds for any $\xi,\eta\in E$, hence the following equality in $\mathcal{M}(C^*(L))$:
\[\left|B_T(f_1,f_2)\right|\leq\|M(T)(m_{f_1})\|.\left|m_{f_2}\right|.\]
It follows that $\|B_T(f_1,f_2)\|\leq \|T\|.\|f_1\|_2.\|f_2\|_2$, where $\|T\|$ denotes the operator norm of $T$, hence continuity of $B_T$ with respect to the topology of uniform convergence on $K$. Consequently, $B_T$ defines a distributional kernel $k_T$ on $\N\times\N$, so that it writes
\[B_T(f_1,f_2)=\int_{\N\times\N}f_1(\bn_1)f_2(\bn_2)\,k_T(\bn_1,\bn_2)\,d \bn_1\,d\bn_2.\]
Since the left action of $G$ on $\E$ preserves the inner product, commutation of $T$ to this action implies that $B_T(\bn_0.f_1,\bn_0.f_2)=B_T(f_1,f_2)$.

Applying the diffeomorphism $(\bn_1,\bn_2)\mapsto(\bn_1^{-1}\bn_2,\bn_2)$ of $\N\times\N$ and using Proposition \ref{distribinvar} and Remark \ref{remdistribinvar} we see that $k_T$ satisfies the equation \[k_T(\bn_1,\bn_2)=k_T(1,\bn_1^{-1}\bn_2).\]
It follows that defining a distribution $c_T$ on $\N$ by $c_T(\bn) = k_T(1,\bn)$ implies that $k_T(\bn_1,\bn_2) = c_T(\bn_1^{-1}\bn_2)$. Invariance under the $A$ action will allow us to characterise this distribution. Namely, the action of an element $l=ma\in L$ on an elementary tensor $f\otimes\varphi\in L^2(\N)\otimes C^*(L)$ is given by the formula $l.f\otimes\varphi =e^{\rho\log(a)} f\circ c_l\otimes U_l.\varphi$ of Proposition \ref{leftactionopenpict}.

Commutation to the $L$ action implies that $B_T(a.f_1,a.f_2) = B_T(f_1,f_2)$ for any $a\in A$. It follows that
\begin{eqnarray*}
e^{-2\rho\log(a)}&&\hspace{-8mm}\int_{\N\times\N}f_1(c_{a}(\bn_1))f_2(c_{a}(\bn_2))\,k_T(\bn_1,\bn_2)\,d\bn_1\,d\bn_2\\
&=&e^{2\rho\log(a)}\int_{\N\times\N}f_1(\bn_1)f_2(\bn_2)\,k_T(c_{a}^{-1}(\bn_1),c_{a}^{-1}(\bn_2))\,d\bn_1\,d\bn_2\\
&=&\int_{\N\times\N}f_1(\bn_1)f_2(\bn_2)\,k_T(\bn_1,\bn_2)\,d\bn_1\,d\bn_2,
\end{eqnarray*}
the first equality resulting of $\text{mod}^{\N}(c_a)=e^{-2\rho\log(a)}$. As a consequence the distribution $c_T$ satisfies the following invariance property: $c_T(c_a(\bn))= e^{-2\rho\log(a)}c_T(\bn)$, that is, for any test function $\varphi$ on $\N$, \[\hspace{3cm}\scal{c_T}{\varphi\circ c_a} = \scal{c_T}{\varphi}.\tag{$\ddag$}\label{cTinvar}\]

\begin{proposition}
If the real rank of $G$ is $1$, then the Radon measures on $\N$ satisfying relation (\ref{cTinvar}) are multiples of the Dirac measure.
\end{proposition}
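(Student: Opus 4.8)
The plan is to exploit the dilation-invariance relation $(\ddag)$ together with the one-dimensionality of $A$ to pin down any such measure. First I would fix a generator: since the real rank is $1$, the group $A$ is isomorphic to $\R_+^*$, so there is a one-parameter family $(c_{a_t})_{t\in\R}$ of dilations of $\N$, with $c_{a_t}\circ c_{a_s}=c_{a_{t+s}}$, and $\operatorname{mod}^{\N}(c_{a_t})=e^{-2\rho\log a_t}$ grows (or decays) strictly monotonically in $t$. Relation $(\ddag)$ says exactly that the Radon measure $c_T$ is invariant under the $\R$-action $\varphi\mapsto\varphi\circ c_{a_t}$ on $C_c(\N)$. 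The strategy is then: (i) show that the support of $c_T$ must be contained in the unique fixed point $\{1\}$ of all the dilations, and (ii) show that a distribution supported at a single point and homogeneous of the correct degree under the dilation group must be a multiple of the Dirac mass at that point (no derivatives of Dirac can occur, because the weights of $\ad$ acting on $\nb$ are strictly negative on the positive chamber, so differentiated Dirac masses transform with a \emph{different} character than the one forced by $(\ddag)$).

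For step (i), suppose $\bnu_0\neq 1$ lies in $\supp c_T$. Choose a relatively compact open neighbourhood $V$ of $\bnu_0$ with $1\notin\overline V$. Because $1$ is an attracting (resp.\ repelling) fixed point of the dilation flow and $\bnu_0\neq 1$, the translates $c_{a_t}(V)$ escape to infinity as $t\to+\infty$ (or as $t\to-\infty$); more precisely, for any compact $K\subset\N$ there is $t_0$ with $c_{a_t}(V)\cap K=\emptyset$ for $|t|\ge t_0$ of the appropriate sign. Pick $\varphi\in C_c(\N)$, $\varphi\ge 0$, with $\varphi\equiv 1$ on a neighbourhood of $\bnu_0$ and $\supp\varphi\subset V$. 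Iterating $(\ddag)$ gives $\scal{c_T}{\varphi\circ c_{a_{nt}}}=\scal{c_T}{\varphi}$ for all $n$. But as $n\to\infty$ the functions $\varphi\circ c_{a_{nt}}$ are supported on the shrinking sets $c_{a_{-nt}}(\supp\varphi)$, which converge to $\{1\}$; choosing a fixed compact neighbourhood $K$ of $\supp\varphi$ that omits $1$, these supports eventually leave $K$, so the pairing can be controlled by the mass of $c_T$ near $1$. The cleaner route: $(\ddag)$ forces $\scal{c_T}{\varphi\circ c_{a_t}}$ to be constant in $t$, yet $\varphi\circ c_{a_t}\to 0$ pointwise with supports leaving any compact set not containing $1$, contradicting $\scal{c_T}{\varphi}\neq 0$ (which holds since $\varphi\ge 0$, $\varphi>0$ near the support point $\bnu_0$, forcing a nonzero value up to replacing $\varphi$ by a suitable test function). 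Hence $\supp c_T\subseteq\{1\}$.

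For step (ii), a Radon \emph{measure} supported at $\{1\}$ is automatically of the form $c_T=\kappa\,\delta_1$ for some scalar $\kappa$ (no derivatives are allowed for a measure, as opposed to a general distribution), and $\delta_1$ indeed satisfies $(\ddag)$ since $\scal{\delta_1}{\varphi\circ c_a}=\varphi(c_a(1))=\varphi(1)=\scal{\delta_1}{\varphi}$. This completes the identification. I expect step (i) — ruling out support away from the fixed point — to be the only real content; it is where the rank-one hypothesis enters, guaranteeing that $A$ acts on $\N$ with $1$ as its unique fixed point and with every non-fixed point pushed to infinity under the flow in one of the two time directions. The remaining verification, that a measure supported at a point is a multiple of Dirac, is immediate, and the homogeneity weight matches by the direct computation above.
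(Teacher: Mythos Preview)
Your argument is correct and takes a genuinely different route from the paper. The paper passes to exponential coordinates $\nb\simeq\R^p\oplus\R^q$, introduces the homogeneous gauge $r(u,v)=(\|u\|^4+\|v\|^2)^{1/4}$, and uses the resulting polar-type decomposition $\N\setminus\{1\}\simeq\R_+^*\times S_1$: for a fixed spherical test function the radial pairing becomes an $\R_+^*$-invariant distribution, hence by Proposition~\ref{distribinvar} a multiple of the Haar measure $\tfrac{dr}{r}$; since $\tfrac{dr}{r}$ has infinite mass near $0$, no nonzero multiple can be the restriction of a Radon measure, which forces $\supp c_T\subset\{1\}$. The paper then allows derivatives of $\delta_1$ and eliminates them via the homogeneity weight. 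You bypass the explicit coordinates entirely with a direct contraction argument on the dilation flow, which is more elementary; and your step~(ii) is sharper, since a Radon measure (an order-zero distribution) supported at a single point is automatically a multiple of $\delta_1$, with no need to discuss derivatives at all. Two small points to tighten: in step~(i) only the flow direction in which $\supp(\varphi\circ c_{a_t})=c_{a_{-t}}(\supp\varphi)$ \emph{contracts toward $1$ while never containing it} is usable, the vanishing of the pairing then coming from $|c_T|\bigl(B_\varepsilon(1)\setminus\{1\}\bigr)\to 0$ by continuity from above for a locally finite measure --- the ``escape to infinity'' direction you also invoke gives nothing, since $|c_T|(\N)$ need not be finite; and since $c_T$ is $\mathcal{M}(C^*(L))$-valued here, the ``scalar $\kappa$'' in your conclusion is really an element $U\in\mathcal{M}(C^*(L))$ (one reduces to the scalar case by composing with continuous functionals).
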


\begin{proof}

Since $\N$ is a simply connected nilpotent Lie group, it may be identified \textit{via} the exponential map to its Lie algebra $\nb=\g_{-\alpha}\oplus\g_{-2\alpha}\simeq\R^p\oplus\R^q$, in a way that preserves measures and allows to identify spaces of test functions and distributions. Under these identifications, the action by dilations of $A$ on $\N$ is given on $\nb$ in terms of $\alpha$ by $c_a(\bn)\simeq(\alpha(a)u,\alpha(a)^2v)$ for $a\in A$ and $\bn\in\N$ identified to $(u,v)\in\R^p\oplus\R^q$. We denote by $a.(u,v)$ this last expression.

Denote $r(u,v)=\left(\|u\|^4 + \|v\|^2\right)^{\frac{1}{4}}$ for $(u,v)\in\R^p\oplus\R^q$ and let $c_T^0$ be the restriction of $c_T$ to the open subset $\N\setminus\left\{1\right\}$. The function $r$ is the Lie algebraic analogue of the norm function introduced above. For $t\in\R_+$, we denote the surface of equation $r(u,v)=t$. Then, for $a\in A\simeq\R_+^*$, we have $r(a.(u,v))=a\,r(u,v),$ and the map \[(u,v)\longmapsto(r(u,v),\frac{(u,v)}{r(u,v)})\] is a diffeomorphism between $\N\setminus\left\{1\right\}$ and $\R_+^*\times S_1$. Fix $\psi_0\in\test(S_1)$. If $\varphi$ is a test function on $\R_+^*$, then $\varphi\otimes\psi_0$ is in $\test(\R_+^*)\otimes\test(S_1)\subset\test(\N\setminus\left\{1\right\})$ and the map $\varphi\mapsto\scal{c_T^0}{\varphi\otimes\psi_0}$ is a homogeneous distribution on $\R_+^*$. Proposition \ref{distribinvar} implies that $\scal{c_T^0}{\varphi\otimes\psi_0}$ is of the form \[c(\psi_0)\int_0^{+\infty}\varphi(r)\frac{dr}{r}\] so $c_T^0$ cannot be the restriction of a Radon measure. It follows that the support of $c_T$ is reduced to $\left\{1\right\}$, so that $c_T$ is a combination of derivatives in the sense of distributions of the Dirac measure. The homogeneity condition finally proves that $c_T = \delta_1.1_{\mathcal{M}(C^*(L))}$ up to a constant.
\end{proof}

According to the above result, there exists a multiplier $U\in\mathcal{M}(C^*(L))$ such that $c_T= U\delta_1$. It follows that $k_T(\bn_1,\bn_2)=U.\delta_1(\bn_1^{-1}\bn_2)$, hence the following form of the bilinear form: \[B_T(f_1,f_2)=\scal{f_1}{f_2}_{L^2}\;U\]
for $f_1,f_2\in L^2(\N)$.

As a consequence, for $a_1,a_2\in C^*(L)$,
\begin{eqnarray*}
\scal{M(T)(m_{f_1}\otimes m_{a_1})}{m_{f_2}\otimes m_{a_2}} &=& \scal{f_1}{f_2}_{L^2}\;m_{a_1}^*\;U\;m_{a_2}\\
&=&\scal{f_1\otimes U^*(a_1)}{f_2\otimes a_2}
\end{eqnarray*}
in $\mathcal{M}(C^*(L))$, and $M(T)$ coincides with right composition by $U^*$. This map is $C^*(L)$-linear only if $U$ satisfies the centrality condition, which concludes the proof of Theorem \ref{Thm_Irred}.

\end{proof}
\begin{remark}
In the special case of $q=0$, the distribution $c_T$ is homogeneous of degree $-p$. If $G=\GR$ for instance, $p=1$ and the problem reduces to the Euler equation on $\R$, the solutions of which are known to be combinations of the Dirac measure $\delta_0$ and the \textit{principal value} distribution $\text{Vp}\left(\frac{1}{x}\right)$. The latter being of order $1$, it is not the restriction of a Radon measure, which implies that $c_T = \delta_0$ up to a constant factor.
\end{remark}

The above result should be seen as the analogue at the level of Hilbert modules of the generic irreducibility theorem due to Harish-Chandra in the general cuspidal case, and to Bruhat in the minimal one (see \cite{Lipsman}). More precisely, they established the irreducibility of all $P$-series representations except for the ones induced by elements in $\widehat{M}_d\times\widehat{A}$ fixed under the action of the Weyl group $W_P$, for which reducibility may happen. In view of the Schur Lemma, it means that the self-intertwiners for these representations amount to homotheties, except for the ones coming from a subset of lower dimension in $\widehat{L}$.

In our global framework, Theorem \ref{Thm_Irred} states that self-intertwiners reduce to what plays the role of homotheties on a module over a non-commutative algebra, that is multipliers satisfying the extra centrality condition needed to be linear. That all the bounded operators commuting to $C^*(G)$ are trivial in this sense, reflects the fact that the subset of parameters in $\widehat{L}$ giving reducibility is too small to manifest at the level of $\E$.

\section{Standard intertwining integrals}\label{intertwine}

The generic irreducibility theorem mentioned above states that, with respect to the Plancherel measure, almost all the $P$-series are irreducible. To deal with the representations in the remaining measure zero set, it is necessary to determine if non-trivial self-intertwiners may exist. Knapp and Stein successfully developed such a theory of intertwining operators in \cite{KS1,KS2}, some aspects of which are still being discovered. The first applications were related to the detection of fine reducibility phenomena in the $P$-series and explicit computation of densities in the Plancherel formula (see \cite{Knapp1}).

Their method starts from Bruhat's theory and the remark that certain integrals formally enjoy the intertwinig properties although they are given by non-locally integrable kernels. The constructions then roughly proceeds in two steps. The first one consists in allowing the parameter in $\widehat{A}$ seen as a subset of $\a'\otimes\C$ to take non-purely imaginary values. The corresponding integrals are then convergent on a domain and the operators may be defined as meromorphic extensions. The second step consists in normalising these operators, to make them unitary. The normalising functions are related to densities in the Plancherel formula and the study of their poles allows to decide of the reducibility of induced representations.

Let us now turn to standard intertwining integrals. As explained at the beginning of the previous paragraph, the theory of intertwining operators relies on the possibility to give meaning to certain integral formula. In the classical context, the integral operators are meant to intertwine the representations $\pi_{\sigma,\chi}$ and $\pi_{w.\sigma,w.\chi}$. One is led to study the so-called \textit{standard operator} $I_w$ given by \[I_w F(g)=\int_{\N} F(gw\bn)\,d\bn,\] which formally turns a section $F$ on $G/LN$ into a section on $G/L\N$.

At the level of the generalised induction modules introduced above, intertwining operators should be $C^*(G)$-invariant operators $\mathcal{E}(G/N)\longrightarrow\mathcal{E}(G/\N)$, preferably preserving the inner products. As a first step in the direction of such a theory, we study what can be defined by considering the integral formula $I_w$ on a dense submodule of $\E$. We will prove the following statement:

\begin{theorem}\label{convergence}
The standard integral \[\int_{\N} F(gw\bn)\,d\bn\] defines a linear map $C_c(G/N)\rightarrow C(G/N)$.
\end{theorem}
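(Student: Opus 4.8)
The plan is to work in the open picture and reduce the integral $\int_{\N} F(gw\bn)\,d\bn$ to a genuinely convergent integral over $\N$ by exploiting the decomposition according to $\N MAN$. First I would fix $F\in C_c(G/N)$ and a point $g\in G$, and split $gw\bn$ according to whether it lies in the open cell: for almost all $\bn$ we may write $gw\bn = \n(gw\bn)\,\lm(gw\bn)\,n$ with $n\in N$, so that $F(gw\bn) = F(\n(gw\bn)\,\lm(gw\bn))$, since $F$ is $N$-invariant. The key is to change variables from $\bn$ to $\bnu = \n(gw\bn) = \n(gw\n(w\cdot))\cdots$; more precisely, I would first handle the model case $g = e$, where Lemma~\ref{transfo_w} gives $\n(w\n(w\bnu)) = c_\mu(\bnu)$ and Lemma~\ref{transfo_w_int} provides exactly the Jacobian identity $\int_{\N} h(\n(w\bnu))\,e^{-2\rho\log\ab(w\bnu)}\,d\bnu = \int_{\N} h(\bnu)\,d\bnu$. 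The point of introducing the $e^{-2\rho\log\ab(w\bnu)}$ factor is that the naive kernel is not locally integrable near the bad point, but once the change of variables is performed the singularity is absorbed and what remains is an honest integral of a compactly supported continuous function.

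Concretely, the expected mechanism is this: writing the change of variables $\bn \mapsto \bnu = \n(w\bn)$ converts $\int_{\N} F(gw\bn)\,d\bn$ into an integral over $\bnu\in\N$ of $F$ evaluated at a point of $G/N$ whose $\N$-component and $L$-component are controlled by Lemma~\ref{transfo_w} (for the $w$-part) and by Lemma~\ref{conj_L} (to absorb the left translation by $g$, after decomposing $g$ itself according to Bruhat, i.e. $g\in\N MAN$ or $g = \bn' \lm w$). Because $F$ has compact support in $G/N$ and the parametrisation of the open cell by $\N MA$ is a diffeomorphism, the pullback of $\supp F$ under this parametrisation is compact in $\N$; the factor $e^{-2\rho\log\ab(w\bnu)}$ that appears from the Jacobian in Lemma~\ref{transfo_w_int} is continuous and the resulting integrand is continuous with compact support in $\bnu$, hence the integral converges absolutely. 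One then checks that the resulting function of $g$ is again $N$-invariant on the right (immediate, since $F$ is and $\bn\mapsto F(gw\bn)$ only involves $g$ through left translation) and continuous in $g$: continuity follows by dominated convergence, using that as $g\to g_0$ the integrands converge uniformly on a fixed compact set in $\N$ while the supports stay inside a fixed compact set, exactly as in the strong-continuity argument in the proof of Proposition~\ref{Prop_leftaction}.

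I would organise the steps as: (1) fix $F$ and reduce, using $N$-invariance, to studying $\bn\mapsto F(\n(gw\bn)\lm(gw\bn))$; (2) use Bruhat decomposition to treat $g\in\N MAN$ (where $\n(gw\bn)$, $\lm(gw\bn)$ are computed directly and the integral is trivially $\int_{\N} F(\cdot)\,d\bn$ over a compact set) and $g$ in the big cell separately, the latter via $g = \bn_1 l_1 w$; (3) in the big-cell case perform the change of variables $\bnu = \n(w\bn)$, invoking Lemma~\ref{transfo_w_int} for the Jacobian and Lemmas~\ref{transfo_w} and \ref{conj_L} to rewrite $\n$, $\m$, $\ab$ of the transformed point, so that the integral becomes $\int_{\N}$ of a compactly supported continuous function of $\bnu$; (4) conclude absolute convergence from compactness of $\supp F$ pulled back to $\N MA$; (5) prove continuity in $g$ by the uniform-convergence-on-compacts argument.

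\textbf{Main obstacle.} The hard part will be step (3): correctly tracking all the $\N$-, $M$- and $A$-components through the composition of left translation by $g$, translation by $w$, and the projection onto the open cell, so that the Jacobian factor $e^{-2\rho\log\ab(w\bnu)}$ from Lemma~\ref{transfo_w_int} exactly cancels the apparent non-integrability of the kernel near the excluded point $wL$. Equivalently, one must verify that the pullback of $\supp F\subset G/N$ under the map $\bn\mapsto gw\bn$ composed with the open-cell chart is compact in $\N$ — this is where the real-rank-one structure and the precise behaviour of $\ab(w\bn)$ as $\bn\to 1$ enter, and it is the only place the argument is genuinely more than bookkeeping.
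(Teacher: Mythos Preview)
Your proposal misidentifies the convergence mechanism and skips a step the paper cannot do without.

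\textbf{The Jacobian does not cancel the singularity.} Carrying out the change of variables $\bnu\leftrightarrow\n(w\bnu)$ exactly as you suggest (this is Proposition~\ref{Iwopen}) produces
\[
\Iw(f\otimes\varphi)(x_0)=\int_{\N} f(\bn_0\bnu)\,[U_{\lm(w^{-1}\bnu)}\varphi]^w(l_0)\,\frac{d\bnu}{|\bnu|},
\]
and the factor $e^{-2\rho\log\ab(w\bnu)}$ from Lemma~\ref{transfo_w_int} is precisely what \emph{creates} the singular kernel $|\bnu|^{-1}$ at $\bnu=1$, not what removes it. The actual reason the integral converges (Proposition~\ref{Iw_converge}) is that as $\bnu\to1$ one has $\ab(w^{-1}\bnu)\to 0$, so $\lm(w^{-1}\bnu)^{-1}c_w(l_0)$ leaves every compact subset of $L$; since $\varphi\in C_c(L)$, the integrand vanishes identically near $\bnu=1$. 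The compact support in the $L$-direction, not any Jacobian cancellation, is what kills the singularity.

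\textbf{The open-cell pullback need not be compact.} Your step~(4) asserts that because $F\in C_c(G/N)$ and $\N MA\to G/N$ is a diffeomorphism onto the open cell, the pullback of $\supp F$ is compact. This is false whenever $\supp F$ meets the closed piece $wL\subset G/N$: the open-cell chart does not see $wL$, and a compact neighbourhood of a point of $wL$ pulls back to an unbounded set in $\N\times L$. The paper deals with this by a partition of unity on $G/P$: write $F=p^*u\,F + p^*(1-u)\,F$ with $u$ supported near $[w]$ and $1-u$ supported near $[1]$. The second piece is genuinely supported in the open cell, so the $C_c(\N)\times C_c(L)$ argument applies. For the first piece one left-translates by $w$ to move its support off $[w]$, then invokes the (formal) $G$-equivariance of $I_w$. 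Your case split on $g$ rather than on $F$ does not provide a substitute for this reduction, and your claim that the case $g\in\N MAN$ is ``trivially $\int_{\N} F(\cdot)\,d\bn$ over a compact set'' presupposes exactly the compactness that fails when $\supp F\cap wL\neq\emptyset$.
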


\subsection{Proof of the convergence}

Recall that $G$ diffeomorphically decomposes into $KAN$. Hence any $g\in G$ may be written accordingly $g=\underline{k}(g)\underline{a}(g)\underline{n}(g)$. The proof of Theorem \ref{convergence} essentially relies on the following property of the map $\underline{a}$.

\begin{lemma}\label{properness}
The restriction of the Iwasawa projection \[\underline{a}|_{\N}:\N\longrightarrow A\] is proper. 
\end{lemma}

\begin{proof}
Let us first assume that $G$ has real rank one. The result can be obtained in this case using so-called $\mathrm{SU}(2,1)$-reduction as explained in \cite{HelgasonDS}. More precisely, let us assume that $\alpha$ and $2\alpha$ are the positive restricted roots, so that $\nb=\g_{-\alpha}+\g_{-2\alpha}$, to which $\N$ identifies. Then, writing $\bn=\exp (X+Y)$ with $X\in\g_{-\alpha}$ and $Y\in\g_{-2\alpha}$, Theorem 3.8 of \cite[Ch. IX §3]{HelgasonDS} states that $\underline{a}(\bn)$ is of the form \[\exp \left(C_1\ln\left[(1+C_2|X|^2)^2+4C_2|Y|^2\right]H_0\right)\] where $H_0$ is a fixed element generating $\a$ and $C_1,C_2$ are constants depending on the dimensions of $\g_{-\alpha}$ and $\g_{-2\alpha}$. It is then straightforward to check the properness of the Iwasawa projection, studying the dependance in the coordinates $X$ and $Y$.

Finally, the higher rank situation boils down to the previous one by another classical reduction. Consider the set $\left\{\alpha_1,\ldots,\alpha_p\right\}$ of indivisible positive roots of $\a$ in $\g$. Let $\nb_{\alpha_i}=\g_{-\alpha_i}+\g_{-2\alpha_i}$ and $\N_{\alpha_i}=\exp (\bn_{\alpha_i})$. Then there exists a diffeomorphism \[\varphi:\prod_{i=1}^p\N_{\alpha_i}\longrightarrow\N\] such that \[\underline{a}\left(\varphi(\bn_1,\ldots,\bn_p)\right)=\underline{a}(\bn_1)\ldots\underline{a}(\bn_p),\] which concludes the proof.
\end{proof}

Theorem \ref{convergence} follows rather easily from the above lemma. Indeed, the formula defining $I_w$ is clearly $G$-equivariant, so that it is enough to establish the convergence of $\int_{\N}F(g_0 w \bn)\,d\bn$ for $F\in C_c(G/N)$ and any special $g_0\in G$. Chosing $g_0=w^{-1}$ and retaining the notation introduced above, the problem reduces to proving the convergence of \[\int_{\N}F\left(\underline{k}(\bn)\underline{a}(\bn)\right)\,d\bn\] since $F$ is $N$-invariant. Applying Lemma \ref{properness} concludes the proof.\hfill$\square$

\subsection{Expression in the open picture}

Although the most natural way to define $I_w$ is to consider a subspace of functions in $\E$, it may be useful to have explicit formulas in the other pictures at hand. In order to obtain a convenient expression, we first establish some formulas regarding the action of $G$ on the compact flag manifold $G/P$. The following lemma expresses the action $G\curvearrowright G/P$, composed with the `stereographic projection' of $G/P$ on the euclidean space $\N$\footnote{In the case of $G=\GR$, the group $\N$ is a real line on which the flag manifold $G/P\simeq\mathbb{S}^1$ surjects.} by means of the decomposition of almost all of $G$ according to $\N MAN$.

\begin{lemma}\label{transfo_w}
Let $g\in G$, $\bar{\nu}_0\in\N$ such that $g\bnu_0\in \N MAN$ and $\bnu\in\N\setminus\left\{1\right\}$. Set $\mu=w^2\in M$. Then,
\begin{enumerate}[(i)]
\item $\n(g^{-1}\n(g\bnu_0))=\bnu_0$
\item $\n(w\n(w\bnu))=c_{\mu}(\bnu)$
\item $\m(w\n(w\bnu)) = \mu\,\m(w\bnu)^{-1}$ 
\item $\ab(w\n(w\bnu)) = \ab(w\bnu)^{-1}$
\item $\lm(w\n(w\bnu)) = \mu\,\lm(w\bnu)^{-1}$
\end{enumerate}
\end{lemma}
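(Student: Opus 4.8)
The plan is to deduce all five identities from a single fact — the uniqueness of the factorisation of an element of the big cell $\N MAN\subset G$ as a product in $\N\cdot M\cdot A\cdot N$ (equivalently, in $G/N$, the uniqueness of the $\N MA$-decomposition of Notation \ref{nman}) — combined with two elementary structural remarks. First, $L=MA$ normalises $N$, so a product $n\,l$ with $n\in N$, $l\in L$ may always be rewritten as $l\,n'$ with $n'\in N$. Second, $M$ normalises $\N$ but does not centralise it, so for $m\in M$ and $\bn\in\N$ one has $m\,\bn=c_m(\bn)\,m$ with $c_m(\bn)\in\N$; it is precisely this non-central behaviour that will produce the conjugation by $\mu$ in (ii) and the residual factor $\mu$ in (iii) and (v). Recall finally that, the Weyl group being of order two in real rank one, $\mu=w^2$ lies in $M$.

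For (i): writing $g\bnu_0=\n(g\bnu_0)\,\lm(g\bnu_0)\,n_0$ for some $n_0\in N$ and solving for $\n(g\bnu_0)$ yields
\[
g^{-1}\n(g\bnu_0)=\bnu_0\,n_0^{-1}\,\lm(g\bnu_0)^{-1}.
\]
Since $n_0^{-1}\lm(g\bnu_0)^{-1}\in NL=LN=P$, the right-hand side lies in $\N P=\N MAN$ and its $\N$-factor is $\bnu_0$; uniqueness gives $\n(g^{-1}\n(g\bnu_0))=\bnu_0$.

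For (ii)--(v): I first record that, for $\bnu\in\N\setminus\{1\}$, both $w\bnu$ and $w\,\n(w\bnu)$ lie in $\N MAN$ and have non-trivial $\N$-factor. This rests on the fact that the complement of $\N MAN$ in $G$ is the coset $wP$, which equals $w^{-1}P$ because $w^2\in P$: as $\N\subset\N MAN$ is disjoint from $wP$, for $\bar m\in\N$ one has $w\bar m\in wP$ only if $\bar m\in P$, i.e. $\bar m=1$, while $w\bar m\notin P$ always, since $w\bar m\in P$ would force $\bar m\in w^{-1}P=wP$; applying this with $\bar m=\bnu$ and then with $\bar m=\n(w\bnu)$ disposes of the domain questions. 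Now write $w\bnu=\n(w\bnu)\,\m(w\bnu)\,\ab(w\bnu)\,n$ with $n\in N$; solving for $\n(w\bnu)$ and multiplying on the left by $w$ gives
\[
w\,\n(w\bnu)=w^2\,\bnu\,n^{-1}\,\lm(w\bnu)^{-1}=\mu\,\bnu\,n^{-1}\,\lm(w\bnu)^{-1}.
\]
Substituting $\mu\,\bnu=c_\mu(\bnu)\,\mu$, then moving the $N$-factor to the right (using that $L$ normalises $N$) and splitting $\mu\,\lm(w\bnu)^{-1}=\bigl(\mu\,\m(w\bnu)^{-1}\bigr)\,\ab(w\bnu)^{-1}$ ($M$ and $A$ commuting), one arrives at a factorisation
\[
w\,\n(w\bnu)=c_\mu(\bnu)\,\bigl(\mu\,\m(w\bnu)^{-1}\bigr)\,\ab(w\bnu)^{-1}\,n''
\]
with $n''\in N$, which is of the form $\N\cdot M\cdot A\cdot N$. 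Reading off the four factors via uniqueness gives (ii), (iii) and (iv) simultaneously, and (v) follows by multiplying (iii) and (iv). Alternatively, (ii) can be obtained from (i) applied to $g=w$, using the transformation rule $\n(mg)=c_m(\n(g))$ for $m\in M$, the invariance of $\n$ under right translation by $P$, and the commutation $w\mu=\mu w$.

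There is no genuine obstacle here; the only thing to watch is the bookkeeping around non-commutativity — keeping the conjugations when moving $\mu$ past $\bnu$ and the $L$-factor past $N$ — and checking at each step that the elements involved remain in the open cell where $\n,\m,\ab,\lm$ are defined.
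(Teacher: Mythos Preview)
Your proof is correct and follows essentially the same approach as the paper: both arguments rest on writing out the relevant elements in the $\N MAN$ factorisation and invoking uniqueness. Your treatment is slightly more streamlined in that you obtain (ii)--(iv) in a single stroke by computing the $\N MAN$ decomposition of $w\,\n(w\bnu)$ directly, whereas the paper first derives (ii) separately from (i) together with the right-$M$-invariance of $\n$ (the alternative you mention), and then obtains (iii)--(iv) by comparing the two decompositions $w\bnu=\bar n_0 p_0$ and $w\bar n_0=\bar n_1 p_1$; your added verification that the elements stay in the open cell is also more explicit than the paper's.
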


\begin{proof}
Write $g\bnu_0$ with respect to $\N P$ as $g\bnu_0=\n({g\bnu_0}) p$. Then \[g^{-1}\n(g\bnu_0)=g^{-1}\n(g\bnu_0)p p^{-1}=g^{-1}g\bnu_0p^{-1},\] and $(i)$ is a consequence of unicity in the decomposition according to $\N MAN$. $(ii)$ follows from the remark that if $g\in G$ is such that $\n(g)$ exists and $m\in M$, then $\n(gm)=\n(g)$. Indeed, \[ \n(w\n(w\bnu))= \n(w\n(w^{-1}\mu\bnu))= \n(w\n(w^{-1}c_{\mu}(\bnu))),\] hence the result.
Finally, if $w\bnu=\bar{n}_0 p_0$ with $p_0=m_0a_0n_0$ and $w\bar{n}_0=\bar{n}_1p_1$ with $p_1=m_1a_1n_1$, then
\[\bar{n}_0 = w^{-1}\bar{n}_1p_1 = w\mu^{-1}\bar{n}_1p_1 = wc_{\mu^{-1}}(\bar{n}_1)\mu^{-1}p_1\] hence \[w\bnu = wc_{\mu^{-1}}(\bar{n}_1)\mu^{-1}p_1p_0 = wc_{\mu^{-1}}(\bar{n}_1)\mu^{-1}m_1m_0a_1a_0n',\]
for some $n'\in N$. Since $\m(w\n(w\bnu)) = m_1$ and $\ab(w\n(w\bnu)) = a_1$, formulas $(iii)$ and $(iv)$ follow, thus implying $(v)$.
\end{proof}

The action of the non-trivial Weyl element is sometimes called the \textit{inversion} of $\N$ (see \cite[Ch.2 §6]{HelgasonGASS}). Our next result shows how it transforms the measure on $\N$.

\begin{lemma}\label{transfo_w_int}
For $f\in L^1(\N)$, \[\int_{\N}f(\n(w\bnu))e^{-2\rho\log \ab(w\bnu)}\,d\bnu = \int_{\N}f(\bnu)\,d\bnu.\]
\end{lemma}

\begin{proof}
Since $w\in K$, the map $L_w:f\longmapsto f(w\cdot)$ preserves the $C^*(L)$-norm on $\E\simeq L^2(\N)\otimes C^*(L)$. Denote $P$ the map of Theorem \ref{Thm_Open} defined on elementary tensors of $C_c(\N)\otimes C_c(L)$ by \[P(f\otimes\varphi):\bar{n}ma\longmapsto e^{-\rho\log a}f(\bar{n})\varphi(ma).\] For any $x\in\N MA$ and $l_0\in L$, it is clear that $\n(xl_0)=\n(x)$ and $\lm(xl_0)=\lm(x)l_0$. Consequently, if $\bar{n}ma\in\N MA$, \[P(f\otimes\varphi)(w\bar{n}ma)=e^{-\rho\log a}e^{-\rho\log \ab(w\bar{n})}f(\n(w\bar{n}))\varphi(\lm(w\bar{n})ma),\]
and $L_w$ is given on $L^2(\N)\otimes C^*(L)$ by \[L_w(f\otimes\varphi)(\bar{n}ma)=e^{-\rho\log \ab(w\bar{n})}f(\n(w\bar{n}))\lambda_L(\lm(w\bar{n}))(\varphi)(ma).\] Since the $C^*(L)$-norm is given in the open picture by $|f\otimes\varphi|^2=\|f\|_2\varphi^*\varphi$ and preserved by $L_w$, denoting $f_w(\bar{n})=e^{-\rho\log \ab(w\bar{n})}f(\n(w\bar{n}))$, we get
\[\|f_w\|_2^2=\|f\|_2^2,\]
since the action of $\lambda_L(\lm(w\bar{n}))$ does not affect the norm. This proves the proposition for positive functions, and the result follows by linear combination.
\end{proof}

Let us finally describe the effect of conjugating $\bnu$ by elements of $L$ on the decomposition of $w^{-1}\bnu$ according to $\N MAN$.

\begin{lemma}\label{conj_L}
Let $l_0=m_0a_0\in L$ and $\bnu\in\N$. Then,
\begin{enumerate}[(i)]
\item $\m(w^{-1}c_{l_0}(\bnu)) = c_w(m_0)\m(w^{-1}\bnu)m_0^{-1}$
\item $\ab(w^{-1}c_{l_0}(\bnu)) = a_0^{-2}\ab(w^{-1}\bnu)$
\item $\lm(w^{-1}c_{l_0}(\bnu)) = c_w(m_0)\lm(w^{-1}\bnu)l_0^{-1}a_0^{-1}$
\end{enumerate}
\end{lemma}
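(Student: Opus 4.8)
The plan is to exploit uniqueness in the $\N MAN$ decomposition of $G$, exactly as in Lemmas \ref{transfo_w} and \ref{transfo_w_int}, by producing for $w^{-1}c_{l_0}(\bnu)$ an explicit factorisation of the form $\n\,\m\,\ab\,n$ and then reading off the three identities. The starting point is the decomposition $w^{-1}\bnu = \n(w^{-1}\bnu)\,\m(w^{-1}\bnu)\,\ab(w^{-1}\bnu)\,n$ for some $n\in N$, valid for $\bnu$ outside the measure-zero set; write $\bn_1=\n(w^{-1}\bnu)$, $m_1=\m(w^{-1}\bnu)$, $a_1=\ab(w^{-1}\bnu)$ for brevity. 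The key algebraic facts I would use are: $l_0$ normalises $\N$, with $c_{l_0}$ sending $\N$ into $\N$; $w$ normalises $L=MA$ (since $w\in N_K(A)$), so $c_w(m_0)\in M$ and $w^{-1}l_0 w \in L$; and $L$ normalises $N$.

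First I would compute $w^{-1}c_{l_0}(\bnu) = w^{-1}l_0\bnu l_0^{-1} = (w^{-1}l_0 w)(w^{-1}\bnu)l_0^{-1}$. Substituting the decomposition of $w^{-1}\bnu$ gives $(w^{-1}l_0 w)\,\bn_1\,m_1 a_1\,n\,l_0^{-1}$. Now I would move $(w^{-1}l_0 w)$ past $\bn_1$: since $w^{-1}l_0 w\in L$ normalises $\N$, we have $(w^{-1}l_0 w)\bn_1 = c_{w^{-1}l_0 w}(\bn_1)\,(w^{-1}l_0 w)$, where $c_{w^{-1}l_0 w}(\bn_1)\in\N$ is the new $\N$-component. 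At the right-hand end, $n l_0^{-1} = l_0^{-1}(l_0 n l_0^{-1})$ with $l_0 n l_0^{-1}\in N$, which gets absorbed into the trailing $N$-part and does not affect $\m,\ab$. So modulo the $\N$-part and the trailing $N$-part, the $MA$-component of $w^{-1}c_{l_0}(\bnu)$ is the $MA$-component of $(w^{-1}l_0 w)\,m_1 a_1\,l_0^{-1}$, which lies entirely in $L=MA$. Writing $w^{-1}l_0 w = w^{-1}m_0 w\cdot w^{-1}a_0 w = c_{w^{-1}}(m_0)\,a_0$ — here I use that $w$ inverts the rank-one split torus $A$, a standard fact in real rank $1$ that one could also phrase as $w^{-1}a_0 w$ being the element of $A$ obtained from $a_0$ by the Weyl reflection — actually it is cleaner to keep $\ab(w^{-1}a_0 w)$ symbolic and match degrees. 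Collecting the $A$-parts: $\ab(w^{-1}c_{l_0}(\bnu)) = \ab(w^{-1}a_0 w)\,a_1\,a_0^{-1}$, and comparing with $(ii)$ forces $\ab(w^{-1}a_0 w) = a_0^{-1}$, i.e. the identity $(ii)$ is equivalent to the known inversion of $A$ by $w$; the $M$-parts give $\m(w^{-1}c_{l_0}(\bnu)) = c_{w^{-1}}(m_0)\,m_1\,m_0^{-1}$. Comparing signs with the stated $(i)$, which has $c_w(m_0)$ rather than $c_{w^{-1}}(m_0)$, I would use $w^2=\mu\in M$ central enough that $c_{w^{-1}} = c_w\circ c_{\mu^{-1}}$ and absorb the $\mu$-twist, or simply note that $w$ and $w^{-1}$ differ by the $M$-element $\mu$ and recheck the bookkeeping; this sign/representative matching is where I expect to have to be careful. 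Finally $(iii)$ is immediate from $\lm = \m\ab$ once $(i)$ and $(ii)$ are in hand: $\lm(w^{-1}c_{l_0}(\bnu)) = \m(w^{-1}c_{l_0}(\bnu))\,\ab(w^{-1}c_{l_0}(\bnu)) = c_w(m_0)\,m_1 a_1\, m_0^{-1}a_0^{-2} = c_w(m_0)\,\lm(w^{-1}\bnu)\,l_0^{-1}a_0^{-1}$, using that $m_0$ and $a_0$ commute and $l_0^{-1}a_0^{-1} = m_0^{-1}a_0^{-2}$.

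The main obstacle is purely bookkeeping: tracking the correct representative ($w$ versus $w^{-1}$, and the element $\mu=w^2\in M$) and the correct placement of inverses when conjugating, so that the final formulas come out exactly as stated rather than up to an $M$-twist. The conceptual content — uniqueness of the $\N MAN$ decomposition plus the normalisation relations $L\triangleright\N$, $w\triangleright L$, $L\triangleright N$, and $w$ inverting $A$ — is entirely standard, so I would present the computation compactly and flag the Weyl-inversion identity $\ab(w^{-1}a_0 w)=a_0^{-1}$ as the one external structural input.
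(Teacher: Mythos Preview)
Your approach is essentially identical to the paper's: rewrite $w^{-1}c_{l_0}(\bnu)$ as $c_{w^{-1}}(l_0)\,(w^{-1}\bnu)\,l_0^{-1}$, insert the $\N MAN$ decomposition of $w^{-1}\bnu$, push the $L$-factors past the $\N$- and $N$-parts using normalisation, and read off the $MA$-component, invoking that $c_w$ acts on $A$ by inversion for $(ii)$. Your flagged bookkeeping concern about $c_w(m_0)$ versus $c_{w^{-1}}(m_0)$ is legitimate---the paper's own computation yields the $M$-component $c_{w^{-1}}(l_0^{\pm1})\,\m(w^{-1}\bnu)\,l_0^{\mp1}$ and does not explicitly reconcile $c_{w^{-1}}$ with the $c_w$ in the statement either, so this is a harmless representative ambiguity (differing by conjugation by $\mu=w^2\in M$) rather than a gap in your argument.
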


\begin{proof}
$(iii)$ clearly follows from $(i)$ and $(ii)$. For those,
\begin{eqnarray*}
w^{-1}l_0^{-1}\bnu l_0 &=& c_{w^{-1}}(l_0^{-1})w^{-1}\bnu l_0 = c_{w^{-1}}(l_0^{-1})\bar{n}'\lm(w^{-1}\bnu)n'l_0\\
&=& \bar{n}''c_{w^{-1}}(l_0^{-1})\lm(w^{-1}\bnu)l_0n'',
\end{eqnarray*}
with $n',n''\in N$ and $\bn',\bn''\in\N$. $(i)$ follows from identifying the $M$ components. Taking into account the fact that $c_w$ acts on $A$ as the inverse map $a\mapsto a^{-1}$, identifying the $M$ components proves $(ii)$.
\end{proof}

We are now ready to write the standard intertwining integral in the open picture. To this purpose, we denote $\Iw$ the integral formula obtained by applying $I_w$ to elementary tensors of $L^2(\N)\otimes C^*(L)$. Namely, for $f\otimes\varphi$ in $C_c(\N)\otimes C_c(L)$ and $x_0 = \bn_0m_0a_0$, \[\Iw(f\otimes\varphi)(x_0) = e^{\rho\log (a_0)}\int_{\N}e^{-\rho\log \ab(x_0w\bar{\nu})}f(\n(x_0w\bnu))\varphi(\lm(x_0w\bnu))\,d\bnu.\]

\begin{notation}
The automorphism of $C^*(L)$ induced by the conjugation $c_w$ of $L$ is denoted $a\mapsto a^w$.
\end{notation}

\begin{proposition}\label{Iwopen}
Assuming that either side is defined, the equality \[\Iw(f\otimes\varphi)(x_0) = \int_{\N}f(\bn_0\bnu)\left[U_{\lm(w^{-1}\bar{\nu})}\varphi\right]^w(l_0)\,\frac{d\bnu}{|\bnu|}\] holds for $f\otimes\varphi\in C_c(\N)\otimes C_c(L)$ and $x_0 = \bn_0m_0a_0$.
\end{proposition}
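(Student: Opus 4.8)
The plan is to start from the defining integral for $\Iw(f\otimes\varphi)(x_0)$ with $x_0=\bn_0 m_0 a_0$ and rewrite $x_0 w \bnu$ so that the $w$ gets moved past $\bn_0$ and $l_0=m_0a_0$, reducing everything to the already-established decomposition formulas of Lemmas \ref{transfo_w}, \ref{transfo_w_int} and \ref{conj_L}. First I would use the relation $x_0 w\bnu = \bn_0 l_0 w \bnu = \bn_0\,(l_0 w l_0^{-1})\,(l_0 \bnu)$ — no, more carefully: since $w$ normalizes neither $\N$ nor $MA$ cleanly, the right move is to write $x_0 w\bnu = \bn_0\, w\, c_{w^{-1}}(l_0)\,\bnu$. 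Then I would substitute $\bnu' = c_{w^{-1}}(l_0)\bnu\, c_{w^{-1}}(l_0)^{-1}$, i.e. perform a change of variables on $\N$ by the conjugation $c_{l_0'}$ with $l_0' = c_{w^{-1}}(l_0)$, picking up the modular factor $\mathrm{mod}^{\N}(c_{l_0'})$; using that $c_w$ inverts $A$, this factor will be exactly $e^{2\rho\log a_0}$ (matching the $e^{\rho\log a_0}$ prefactor together with the $e^{-\rho\log\ab(\cdot)}$ terms). This reduces the computation to the case $x_0 = \bn_0$, i.e. to understanding $\n(\bn_0 w\bnu)$, $\ab(\bn_0 w\bnu)$, $\lm(\bn_0 w\bnu)$.

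Second, for the $x_0=\bn_0$ case I would use part $(i)$ of Lemma \ref{transfo_w} in the form $\n(\bn_0 w\bnu) = \bn_0\,\n(w\bnu)$ — more precisely, since left translation by $\bn_0\in\N$ commutes with the $\N$-part extraction up to the obvious shift, $\n(\bn_0 \cdot w\bnu) = \bn_0\n(w\bnu)$ and $\lm(\bn_0 w\bnu) = \lm(w\bnu)$, $\ab(\bn_0 w\bnu)=\ab(w\bnu)$. Plugging this in, the integral becomes
\[
e^{\rho\log a_0}\int_{\N} e^{-\rho\log\ab(w\bnu)} f(\bn_0\,\n(w\bnu))\,\varphi(\lm(w\bnu))\,d\bnu.
\]
Third, I would change variables $\bnu \mapsto w^{-1}\bnu$ (abusively; really substitute $\bnu = \n(w\bar\nu')$ style), invoking Lemma \ref{transfo_w_int} to trade $e^{-2\rho\log\ab(w\bnu)}\,d\bnu$ for $d\bnu$, and the identities $(ii)$–$(v)$ of Lemma \ref{transfo_w} to rewrite $\n(w\bnu)$, $\lm(w\bnu)$ in terms of $\bnu$ and $w^{-1}\bnu$. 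Concretely, setting $\bnu = \n(w\bar\nu_1)$ gives $\n(w\bnu) = c_\mu(\bar\nu_1)$ and $\lm(w\bnu) = \mu\,\lm(w\bar\nu_1)^{-1}$, and one recognizes $e^{\rho\log\ab(w^{-1}\bar\nu_1)} = |\bar\nu_1|$ by definition of the norm function, producing the $\frac{d\bnu}{|\bnu|}$ (which is dilation-invariant as recalled) and the term $[U_{\lm(w^{-1}\bnu)}\varphi]^w(l_0)$ after folding the leftover $m_0,a_0$ dependence back in via the $w$-twist $a\mapsto a^w$ and the multiplier $U$. The various $\m,\ab$ bookkeeping from Lemma \ref{conj_L} is what converts the conjugation factors into exactly the stated right-hand side.

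The main obstacle I expect is the careful bookkeeping of the $M$-components and the interplay between $c_w$ (which inverts $A$ but acts nontrivially on $M$, introducing $\mu = w^2$) and the twisted action $a\mapsto a^w$ on $C^*(L)$: one must verify that all the $\mu$'s and $c_w(m_0)$'s coming out of Lemmas \ref{transfo_w} and \ref{conj_L} assemble correctly into the single clean factor $[U_{\lm(w^{-1}\bnu)}\varphi]^w(l_0)$ with no residual $M$-twist, and that the scalar exponential factors $e^{\pm\rho\log(\cdot)}$ all cancel against the Jacobians except for the one that becomes $|\bnu|^{-1}$. This is a matter of tracking each factor through three successive changes of variable rather than a conceptual difficulty, but it is error-prone; the phrase ``assuming either side is defined'' in the statement is precisely what lets me defer all convergence questions to Theorem \ref{convergence} and treat these as formal manipulations of absolutely convergent integrals over compact sets once $f,\varphi$ have compact support and one works in a region where the decompositions are valid.
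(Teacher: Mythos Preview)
Your plan is essentially the paper's own proof: the same three lemmas, the same two changes of variable (an $L$-conjugation and the ``stereographic'' substitution $\bnu\leftrightarrow\n(w\bnu)$ via Lemma~\ref{transfo_w_int}), and the same identification $e^{\rho\log\ab(w^{-1}\bnu)}=|\bnu|$; the only organisational difference is that you conjugate first (by $l_0'=c_{w^{-1}}(l_0)$) and then substitute, whereas the paper writes out $\n(x_0w\bnu)=\bn_0c_{l_0}(\n(w\bnu))$, $\lm(x_0w\bnu)=l_0\lm(w\bnu)$ directly, performs the stereographic change, and only then conjugates (by $l_0$). One slip to fix when you carry this out: after your first substitution the element is $\bn_0 w\bnu' l_0'$, so the argument of $\varphi$ must be $\lm(w\bnu')\,l_0'$, not $\lm(w\bnu')$ as in your displayed intermediate formula --- that trailing $l_0'=c_{w^{-1}}(l_0)$ is exactly what becomes the $(\,\cdot\,)^w(l_0)$ at the end, so it cannot be dropped.
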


\begin{proof}
The identities
\begin{gather*}
\m(x_0w\bnu) = m_0\m(w\bnu)\\
\ab(x_0w\bnu) = a_0\ab(w\bnu)\\
\n(x_0w\bnu) = \bn_0c_{l_0}(\n(w\bnu)) = \bn_0l_0\n(w\bnu)l_0^{-1}
\end{gather*}
are clear. They imply that \[\Iw(f\otimes\varphi)(x_0) = \int_{\N}e^{-\rho\log \ab(w\bnu)}f(\bn_0c_{l_0}(\n(w\bnu)))\varphi(l_0\lm(w\bnu))\,d\bnu.\]

The change of variables $\bnu\leftrightarrow\n(w\bnu)$ leads \textit{via} Lemma \ref{transfo_w_int} to the expression
\[\int_{\N} e^{-\rho\log\left(\ab(w\n(w\bnu))\ab(w\bnu)^2\right)}f\left[\bn_0c_{l_0}(\n(w\n(w\bnu)))\right]\varphi\left[l_0 \lm(w\n(w\bnu))\right]\,d\bnu,\] which simplifies to \[\int_{\N} e^{-\rho\log \ab(w\bnu)}f\left[\bn_0c_{l_0}(c_{\mu}(\bnu))\right]\varphi\left[l_0\mu \lm(w\bnu)^{-1}\right]\,d\bnu,\] using Lemma \ref{transfo_w}.

We computed the modular function $\text{mod}^{\N}c_l = e^{-2\rho\log a}$ for $l=ma\in L$ earlier. Besides, since $\mu=w^2\in M$, it follows that $wc_{\mu^{-1}}(\bnu) = w\mu^{-1}\bnu\mu = w^{-1}\bnu$ for $\bnu\in\N$ hence $\ab(wc_{\mu^{-1}}(\bnu)) = \ab(w^{-1}\bnu)$ and $\m(wc_{\mu^{-1}}(\bnu)) = \m(w^{-1}\bnu)\mu$, which leads to $\lm(wc_{\mu^{-1}}(\bnu)) = \lm(w^{-1}\bnu)\mu$. We deduce from the previous remarks and Lemma \ref{conj_L} that:

\begin{eqnarray*}
\Iw(f\otimes\varphi)(x_0) &=& \int_{\N}e^{-\rho\log \ab(wc_{\mu^{-1}}(\bnu))}f\left[\bn_0c_{l_0}(\bnu)\right]\varphi\left[l_0\mu \lm(wc_\mu{\mu^{-1}}(\bnu))^{-1}\right]\,d\bnu\\
&=&\int_{\N}e^{-\rho\log \ab(w^{-1}\bnu)}f\left[\bn_0c_{l_0}(\bnu)\right]\varphi\left[l_0\lm(w^{-1}\bnu)^{-1}\right]\,d\bnu\\
&\hspace{-2.5cm}=&\hspace{-1cm}\text{mod}^{\N}(c_{l_0^{-1}})\int_{\N}e^{-\rho\log \ab(w^{-1}c_{l_0^{-1}}(\bnu))}f(\bn_0\bnu)\varphi\left[l_0\lm(w^{-1}c_{l_0^{-1}}(\bnu))^{-1}\right]\,d\bnu\\
&\hspace{-2.5cm}=&\hspace{-1cm}\frac{e^{-2\rho\log a_0}}{\text{mod}^{\N}(c_{l_0})}\int_{\N}e^{-\rho\log \ab(w^{-1}\bnu)}f(\bn_0\bnu)\varphi\left[a_0^{-1}\lm(w^{-1}\bnu)^{-1}c_w^{-1}(m_0)\right]\,d\bnu
\end{eqnarray*}

Since $c_w(a_0)=a_0^{-1}$, it follows that \[\Iw(f\otimes\varphi)(x_0) = \int_{\N}f(\bn_0\bnu)\varphi\left[\lm(w^{-1}\bn^{-1}\bnu)^{-1}c_{w^{-1}}(l_0)\right]e^{-\rho\log \ab(w^{-1}\bnu)}\,d\bnu\]

Using the notations $\varphi\mapsto\varphi^w$ for the action of $w$ on $C^*(L)$ and $|\bn|$ for the norm function introduced before, we finally get the expected expression.
\end{proof}

It follows from Proposition \ref{Iwopen} that the standard intertwining integral may be written in the open picture as \[\Iw = \int_{\N}R_{\N}(\bnu)\otimes {U_{\lm(w^{-1}\bar{\nu})}}^w\,\frac{d\bnu}{|\bnu|},\] where $R_{\N}$ denotes the right regular representation of $\N$.

\begin{remark} The standard intertwining integral can also be expressed in the induced picture. More precisely, it is proved in \cite{These} that if $F\in\Eio$, this integral can be written \[I_w F(x)=\int_{\N}U_{\lm(w^{-1}\bn)}F(x\bn)\,\frac{d\bn}{|\bn|}\] for $x\in G/N$, and is well defined because of Theorem \ref{convergence} and the equivalence of the different pictures.
\end{remark}

Let us conclude this section by some general remarks on $C^*$-algebraic intertwining. Theorem \ref{convergence} replaces the meromorphic continuation in the theory of Knapp and Stein. The fact that the operators they obtain in this way are not unitary and need to be normalised has its counterpart in our framework. Namely, $I_w$ does not take its values in $\E$, as it was observed in \cite{These} already in the case of $\GR$. This phenomenon is the analogue of the existence of poles in the classical theory, where these singularities actually enclose all the data about reducibility in the $P$-series.

It is in fact possible to recover this information by studying the operator $I_w$, and more precisely what prevents it from extending to $\E$. In this picture, the `singularity' appears as a distribution $T_w$ on $L$, and it was observed in \cite{Notereducibility}, again for $\GR$, that studying the points in $\widehat{L}$ where the Fourier transform of $T_w$ vanishes also yields the parameters at which reducibility occurs.

Finally, let us sketch some perspectives about the normalisation of the intertwining operators. The procedure in Knapp-Stein theory consists in dividing the standard integral by a certain meromorphic function of the parameter in $\widehat{A}$ so that it becomes unitary. The normalising function is obtained by composing the standard operator by its adjoint. Similarly, our goal is to extract from $I_w$ a unitary operator on the Hilbert module $\E$, twisted by the automorphism of $C^*(L)$ induced by the Weyl element. However, since $I_w$ does not extend to $\E$, it is not possible to perform any kind of polar decomposition, so that one has to come up with an appropriate unitary an check that it actually normalises the standard integral. We came over this problem in \cite{These} by means of functionnal calculus on differential operators in some special cases.

\section{\texorpdfstring{Images of $C^*_r(G)$}{Images of C*r(G)}}\label{images}

In this final section, we argue how a $C^*$-algebraic theory of intertwiners should relate to the analysis of the reduced $C^*$-algebra of Lie groups. 
\begin{definition}
Two cuspidal parabolic subgroups are said to be associate if their Levi components are conjugate. The assocation class of a cuspidal parabolic subgroup $P$ is denoted by $[P]$.
\end{definition}

Results of Harish-Chandra \cite{HC56Char} and Lipsman \cite{Lipsman71} establish that the unitary equivalence classes of the $P$-series representations, for all possible $P$, can be partitioned according to the association classes of the inducing subgroups. This leads to the following decomposition of the reduced dual into a disjoint union \[\widehat{G}_r=\bigsqcup_{[P],\;P\,\text{cuspidal}}\widehat{G}_P\tag{\ensuremath{\divideontimes}}\label{dualG}\] where $\widehat{G}_P$ denotes the set of irreducible components in the $P$-series representations. This result, in relation with the Plancherel formula, describes the reduced dual as a measured space. In order to understand it as a noncommutative topological space, we need to take the reducibility phenomena into account. Those were dealt with by Knapp and Stein \cite{KS1,KS2} by means of their intertwining operators, implementing the intricate action of Weyl groups. The structure of the $C^*$-algebra was described in \cite{Valette85} for the cases where $\widehat{G}_r$ is Hausdorff, and in \cite{NoteWassermann} by means of the operators of Knapp and Stein.

We indicate here how the Hilbert modules $\E$ are expected to provide a good framework to analyse $C^*_r(G)$ with respect to association classes of cuspidal parabolic subgroups.

We no longer assume the real rank of $G$ to be $1$, but we consider $P=MAN$ minimal parabolic, which is a necessary and sufficient condition for $M$ to be compact.

\begin{proposition}
The left action of $C^*(G)$ on $\E$ induces a $*$-morphism \[C^*_r(G)\longrightarrow C_0(\widehat{M}\times\widehat{A},\mathcal{K})\]
\end{proposition}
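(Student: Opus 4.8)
The plan is to produce the $*$-morphism $C^*_r(G)\to C_0(\widehat M\times\widehat A,\mathcal K)$ by decomposing the Hilbert module $\E$ over the spectrum of the right-acting $C^*$-algebra $C^*(L)=C^*(M\times A)$. Since $M$ is compact (because $P$ is minimal parabolic) and $A\cong\mathbb R^r$ is abelian, we have $C^*(M\times A)\cong C^*(M)\otimes C^*(A)\cong\bigl(\bigoplus_{\sigma\in\widehat M}\mathcal K(\mathcal H_\sigma)\bigr)\otimes C_0(\widehat A)$, that is $C^*(L)\cong C_0\bigl(\widehat M\times\widehat A,\mathcal K\bigr)$ after grouping (using that the blocks $\mathcal K(\mathcal H_\sigma)$ are finite-dimensional matrix algebras and vanish at infinity over the discrete set $\widehat M$). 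The central insight is that $\E$ is a full Hilbert $C^*(L)$-module, so its algebra of compact operators $\mathcal K_{C^*(L)}(\E)$ is Morita equivalent to $C^*(L)$, and in fact, in the open picture $\E\simeq L^2(\N)\otimes C^*(L)$ of Theorem \ref{Thm_Open}, one has $\mathcal K_{C^*(L)}(\E)\cong\mathcal K(L^2(\N))\otimes C^*(L)$.

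First I would show that the left action $C^*(G)\to\mathcal L_{C^*(L)}(\E)$ of Proposition \ref{Prop_leftaction}, when restricted along the regular representation $C^*(G)\twoheadrightarrow C^*_r(G)$, actually lands in $\mathcal K_{C^*(L)}(\E)$; equivalently, that elements of $C_c(G)$ act as compact operators. This is the analogue of the classical fact that the image of $C^*_r(G)$ in the parabolically induced picture consists of operators that are ``compact across the flag manifold direction''. One way to see it: in the open picture, fibrewise over $\widehat M\times\widehat A$ the action of $\phi\in C_c(G)$ on $\E$ specialises (via the maps $q_{\sigma,\chi}$ of the Specialisation Corollary) to the operator $\pi^P_{\sigma,\chi}(\phi)$ on $L^2(\N,\mathcal H_\sigma)\cong L^2(\N)\otimes\mathcal H_\sigma$, which is given by a smooth compactly-supported kernel on the compact manifold $G/P$ and hence is a genuine compact operator depending norm-continuously on $(\sigma,\chi)$ and tending to $0$ as $\chi\to\infty$. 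Then I would invoke the universal property / the ideal structure of $C^*(G)$ to conclude that the whole of $C^*_r(G)$ maps into $\mathcal K_{C^*(L)}(\E)$.

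Next I would identify $\mathcal K_{C^*(L)}(\E)$ explicitly. Using $\E\simeq L^2(\N)\otimes C^*(L)$ and the standard computation $\mathcal K_A(\mathcal H\otimes A)\cong\mathcal K(\mathcal H)\otimes A$ for a Hilbert space $\mathcal H$ and a $C^*$-algebra $A$, we get $\mathcal K_{C^*(L)}(\E)\cong\mathcal K(L^2(\N))\otimes C^*(L)\cong\mathcal K(L^2(\N))\otimes C_0(\widehat M\times\widehat A,\mathcal K)\cong C_0\bigl(\widehat M\times\widehat A,\mathcal K(L^2(\N))\otimes\mathcal K(\mathbb C^{d_\sigma})\bigr)$, which is again of the form $C_0(\widehat M\times\widehat A,\mathcal K)$ since a tensor product of copies of $\mathcal K$ is $\mathcal K$ and the matrix-size $d_\sigma$ only affects an inner isomorphism fibrewise. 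Composing the map of the previous paragraph with this identification yields the desired $*$-morphism $C^*_r(G)\to C_0(\widehat M\times\widehat A,\mathcal K)$.

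The main obstacle is the compactness claim, i.e. that the image of $C^*_r(G)$ really consists of $C^*(L)$-compact operators on $\E$ rather than merely adjointable ones. Adjointability is automatic from Proposition \ref{Prop_leftaction}, but compactness requires genuine input: one must exploit that the reduced $C^*$-algebra only sees tempered representations, together with the fact that after quotienting by $N$ the ``transverse'' space is the \emph{compact} flag manifold $G/P$, on which a compactly supported smooth function on $G$ induces a smoothing (hence compact) integral operator. Making this rigorous at the $C^*$-level — controlling the behaviour in the non-compact parameter $\widehat A$ so as to land in $C_0$ and not just $C_b$ — is where the real work lies, and the natural route is to verify it on the dense subalgebra $C_c(G)$ of nice convolution operators (whose kernels on $G/P$ are smooth with support controlled by $\supp\phi$) and then pass to the completion using that $\mathcal K_{C^*(L)}(\E)$ is norm-closed; the $C_0$-decay in $\widehat A$ follows from the Riemann--Lebesgue-type behaviour of the fibrewise operators $\pi^P_{\sigma,\chi}(\phi)$ as $\chi\to\infty$.
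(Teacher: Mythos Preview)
Your approach is correct in outline but takes a genuinely different route from the paper's. The paper obtains compactness structurally rather than analytically: it invokes Rieffel's imprimitivity theorem to identify $C(G/P)\rtimes G\simeq\mathcal K(\EP)$ (using that $G/P$ is compact), so that the canonical map $C^*(G)=\C\rtimes G\to C(G/P)\rtimes G$ already lands in compact operators; then tensoring along $\varepsilon_N:C^*(P)\twoheadrightarrow C^*(L)$ yields $C^*(G)\to\mathcal K(\E)$ directly, with no fibrewise analysis needed. For the factorisation through $C^*_r(G)$, the paper argues that $P$ is amenable, hence the action of $G$ on $G/P$ is amenable, hence $C(G/P)\rtimes G\simeq C(G/P)\rtimes_r G$, into which $C^*_r(G)$ maps naturally. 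The identification $\mathcal K(\E)\cong C_0(\widehat M\times\widehat A,\mathcal K)$ is then the same as yours.

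What each approach buys: the paper's argument is short and conceptual, avoiding any explicit kernel estimates or Riemann--Lebesgue-type decay; compactness and the passage to the reduced algebra are both consequences of general $C^*$-algebraic machinery (imprimitivity and amenability). Your approach is more hands-on and makes the fibrewise picture transparent, but it requires you to actually verify norm-continuity and $C_0$-decay of $(\sigma,\chi)\mapsto\pi^P_{\sigma,\chi}(\phi)$, which is extra work. One point you leave underspecified is precisely \emph{why} the map factors through $C^*_r(G)$: you allude to temperedness in the last paragraph, and indeed the fact that each $\pi^P_{\sigma,\chi}$ is weakly contained in the regular representation does the job, but this deserves to be stated as the mechanism rather than folded into the compactness discussion. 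The paper's amenability argument handles this cleanly and without reference to individual representations.
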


\begin{proof}
Still denoting $\varepsilon_N:C^*(P)\twoheadrightarrow C^*(L)$ the canonical surjection, let $\tau$ be the map \[\C\rtimes G\simeq C^*(G)\longrightarrow C(G/P)\rtimes G \simeq\mathcal{K}(\EG)\] coming from Rieffel's formulation of the Imprimitivity Theorem, taking into account the compactness of $G/P$. Let $\tau_N=\tau\otimes_{\varepsilon_N}1$. Then \[\tau_N:C^*(G)\longrightarrow\mathcal{K}(\E).\]

Since $P$ is amenable, all its actions are. The Imprimitivity Theorem implies that the action of $G$ on $G/P$ is Morita-equivalent to the one of $P$ on $G/G$, hence amenable too (see \cite{CAD} and \cite{JeanetClaire}). It follows that $C(G/P)\rtimes G$ is isomorphic to $C(G/P)\rtimes_r G$, where $C^*_r(G)$ acts naturally. The situation is summed up in the following diagram


\[\xymatrix{\tau_N:\hspace{-1cm}&C^*(G)\ar@{->>}[d]_{\lambda_G}\ar[r]&C(G/P)\rtimes G\ar@{->}[d]_{\simeq}\ar[r]&\mathcal{K}(\E)\ar@{->}[d]^{\simeq}\\
&C_r^*(G)\ar@/^2.15pc/@{-->}[rr]\ar[r]&C(G/P)\rtimes_r G&C_0(\widehat{M}\times\widehat{A},\mathcal{K})\\
}\]

where the isomorphism in the last column is a consequence of the strong Morita equivalence between $\mathcal{K}(\E)$ and $C^*(L)$. Since $L=MA$ with $A$ abelian and $M$ compact, $C^*(L)$ decomposes into \[C_0(\widehat{A})\otimes\bigoplus_{\sigma\in\widehat{M}}\mathrm{End}\,(V_\sigma),\] where the $V_\sigma$ are finite-dimensional, hence the isomorphism \[\mathcal{K}(\E)\simeq C_0(\widehat{M}\times\widehat{A},\mathcal{K})\] by stable equivalence.
\end{proof}

It seems that the result extends to the cuspidal case with a $*$-morphism \[C^*_r(G)\longrightarrow C_0(\widehat{M}_d\times\widehat{A},\mathcal{K}).\] 
The purpose of our further work will be to describe the image of this morphism, using the unitary operators which arise by normalising $I_w$ (see \cite{These} for the cases of $\GR$ and $\GC$). Denoting $C^*_P(G)$ this image, the decomposition (\ref{dualG}) of $\widehat{G}_r$ is likely to translate at the level of $C^*_r(G)$ by \[C^*_r(G)\simeq\bigoplus_{[P],\;P\,\text{cuspidal}}C^*_P(G).\]

\bigskip

\begin{center}
\textbf{Acknowledgements}
\end{center}
The results presented here are part of the author's doctoral dissertation \cite{These}, written under patient and careful advisory of Pr. Pierre Julg in Orléans.

\bibliographystyle{amsplain}
\bibliography{biblio}
\end{document}